\newcommand{\II}{\mathrm{II}} \newcommand{\IM}{\mathrm{IM}} \newcommand{\IH}{\mathrm{IH}} 
\newcommand{\MI}{\mathrm{MI}} \newcommand{\MM}{\mathrm{MM}} \newcommand{\MH}{\mathrm{MH}} 
\newcommand{\HI}{\mathrm{HI}} \newcommand{\HM}{\mathrm{HM}} \newcommand{\HH}{\mathrm{HH}}
\newcommand{\IA}{\mathrm{IA}} \newcommand{\IB}{\mathrm{IB}} \newcommand{\IE}{\mathrm{IE}} 
\newcommand{\MA}{\mathrm{MA}} \newcommand{\MB}{\mathrm{MB}} \newcommand{\ME}{\mathrm{ME}} 
\newcommand{\HA}{\mathrm{HA}} \newcommand{\HB}{\mathrm{HB}} \newcommand{\HE}{\mathrm{HE}}
\newcommand{\IHb}{\mathrm{I\overline{H}}} \newcommand{\MHb}{\mathrm{M\overline{H}}} \newcommand{\HHb}{\mathrm{H\overline{H}}} \newcommand{\IEb}{\mathrm{I\overline{E}}} \newcommand{\MEb}{\mathrm{M\overline{E}}} \newcommand{\HEb}{\mathrm{H\overline{E}}}
\newcommand{\HbI}{\mathrm{\overline{H}I}} \newcommand{\HbM}{\mathrm{\overline{H}M}} \newcommand{\HbH}{\mathrm{\overline{H}H}} \newcommand{\HbHb}{\mathrm{\overline{HH}}} \newcommand{\HbA}{\mathrm{\overline{H}A}} \newcommand{\HbB}{\mathrm{\overline{H}B}} \newcommand{\HbE}{\mathrm{\overline{H}E}} \newcommand{\HbEb}{\mathrm{\overline{HE}}}
\newcommand{\N}{\mathbb{N}}
\newcommand{\Q}{\mathbb{Q}}
\newcommand{\xset}{\times \textrm{-set}}
\title{Some more notions of homomorphism-homogeneity}
\author{Deborah Lockett$^1$ and John K Truss$^2$,} 
\date{University of Leeds$^3$.}
\begin{document}
\maketitle 
\newtheorem{lemma}{Lemma}[section]
\newtheorem{theorem}[lemma]{Theorem}
\newtheorem{corollary}[lemma]{Corollary}
\newtheorem{definition}[lemma]{Definition}
\newtheorem{remark}[lemma]{Remark}
\setcounter{footnote}{1}\footnotetext{Department of Pure Mathematics, University of Leeds, Leeds LS2 9JT, UK, e-mail  d.lockett@outlook.com}
\setcounter{footnote}{2}\footnotetext{Department of Pure Mathematics, University of Leeds, Leeds LS2 9JT, UK, e-mail pmtjkt@leeds.ac.uk, corresponding author}
\setcounter{footnote}{3}\footnotetext{Supported by EPSRC grant EP/H00677X/1.}
\newcounter{number}

\begin{abstract} We extend the notion of `homomorphism-homogeneity' to a wider class of kinds of maps than previously studied, and we investigate the relations between the resulting notions of homomorphism-homogeneity, giving several examples. We also give further details on related work reported in [Deborah Lockett and John K. Truss, Generic endomorphisms of homogeneous Structures,in `Groups and model theory', Contemporary 
Mathematics 576, ed Strungmann, Droste, Fuchs, Tent, American Mathematical Society, 2012, 217-237] about the endomorphisms of chains and generic endomorphisms of trees. \end{abstract}

2010 Mathematics Subject Classification 05C99

keywords: homomorphism, endomorphism, monomorphism, embedding, epimorphism, bimorphism, homogeneous structure. 

\section{Introduction}

The notion of `homogeneity' (sometimes called `ultrahomogeneity') is an important one in model theory, having as it does a strong connection with 
quantifier elimination. In the countable case there have been a number of notable classifications of the members of certain classes which are 
homogeneous, for instance for graphs \cite{Lachlan} and partial orders \cite{Schmerl}. A generalization of this notion called 
`homomorphism-homogeneity' was introduced in \cite{Cameron2}, and developed independently by Ma\v{s}ulovi\'c \cite{Masulovic1}. This word describes a 
suite of possible definitions, stating that a structure $\cal A$ is homomorphism-homogeneous in a certain sense if any finite map from a subset of 
$\cal A$ to $\cal A$ of a particular form extends to a (totally defined) map from $\cal A$ to $\cal A$, of another specified form. The three types of 
map or partial map which were considered in \cite{Cameron2} were homomorphisms, monomorphisms, and isomorphisms. Our main goal in this paper is to 
extend the list to include three other possibilities, namely epimorphisms, bimorphisms, and embeddings, giving six possibilities in all. There are 
still only three possibilities for the partial maps however (since for instance the notion of `finite partial epimorphism' doesn't make very much 
sense for infinite structures), and so there naturally arise eighteen possible notions one might wish to consider. We shall formulate these notions, 
and show that some of them are actually the same, and others may be the same for particular classes. 

The definitions are as follows. We generally work with a countably infinite structure $\cal A$ over a finite relational language (most of our examples are graphs or partial or linear orders) and we write $A$ for the domain of $\cal A$. An {\em endomorphism} is a map $f$ from $A$ to $A$ which preserves all the relations in the signature, meaning that if $(a_1, \ldots, a_n) \in R$ then $(fa_1, \ldots, fa_n) \in R$, (a {\em homomorphism} from $A$ to $A$). Notice that this definition is very sensitive to the choice of language. For instance, strict and nonstrict linear orders, normally regarded as mere renamings of each other, work out quite differently, since if $f$ preserves $<$, then it must actually be injective, but if it is only required to preserve $\le$, it can map many points (in a convex set) to the same point. An {\em epimorphism} is a surjective endomorphism, a {\em monomorphism} is an injective endomorphism, and a {\em bimorphism} is one which is both a monomorphism and epimorphism (that is, injective and surjective). An {\em embedding} is an endomorphism which also preserves the negations of all relations (which automatically implies that it is injective, since we include $=$ as a relation), so this is the same as an {\em isomorphism} to its range, and finally an {\em automorphism} is an embedding which is also surjective.
We use symbols to denote the different kinds of maps as follows:
$\mathrm{H}$ for endomorphisms,
$\mathrm{E}$ for epimorphisms,
$\mathrm{M}$ for monomorphisms,
$\mathrm{B}$ for bimorphisms,
$\mathrm{I}$ for embeddings,
$\mathrm{A}$ for automorphisms.

For the finite approximations, there are still however only three notions, since we cannot capture surjectivity in a finite map. That is, finite partial surjective morphisms correspond to their non-surjective counterparts: finite partial endomorphisms and finite partial epimorphisms are both just homomorphisms between finite substructures; finite partial monomorphisms and bimorphisms are both finite monomorphisms; and finite partial embeddings and automorphisms are both finite isomorphisms. Thus we only consider $\mathrm{H}$, $\mathrm{M}$, $\mathrm{I}$ for the finite approximations ({\em finite partial endomorphism}, {\em finite partial monomorphism}, and {\em finite partial embedding}). There are therefore now eighteen possible notions of homomorphism-homogeneity. 
For instance, a structure $\cal A$ is {\em $\ME$} (or {\em $\ME$-homogeneous}) if every finite partial monomorphism from $\cal A$ into $\cal A$ extends to an epimorphism from $\cal A$ into $\cal A$; and $\cal A$ is $\HI$ (or {\em $\HI$-homogeneous}) if every finite partial endomorphism from $\cal A$ into $\cal A$ extends to an embedding of $\cal A$ into $\cal A$. 

These notions form a natural hierarchy inherited from that of the relation-preserving maps (see Figure~\ref{infhierpic}). 

\begin{figure}[h!tb]				
\hspace{3.9cm}
\xymatrix @!0 @dr	{
\IH \ar@{-}[rr] \ar@{-}[dd] \ar@{-}[dr] & & \MH \ar@{-}[rr] \ar@{-}[dd] \ar@{-}[dr] & & \HH \ar@{-}[dd] \ar@{-}[dr] \\
 & \IE \ar@{-}'[r][rr] \ar@{-}'[d][dd] & & \ME \ar@{-}'[r][rr] \ar@{-}'[d][dd] & & \HE \ar@{-}[dd] \\
\IM \ar@{-}[rr] \ar@{-}[dd] \ar@{-}[dr] & & \MM \ar@{-}[rr] \ar@{-}[dd] \ar@{-}[dr] & & \HM \ar@{-}[dd] \ar@{-}[dr] \\
& \IB \ar@{-}'[r][rr] \ar@{-}'[d][dd] & & \MB \ar@{-}'[r][rr] \ar@{-}'[d][dd] & &  \HB \ar@{-}[dd] \\
\II \ar@{-}[rr] \ar@{-}[dr] & & \MI \ar@{-}[rr] \ar@{-}[dr] & & \HI \ar@{-}[dr] \\
& \IA \ar@{-}[rr] & & \MA \ar@{-}[rr] & & \HA 		}
\caption{Hierarchy picture of the homomorphism-homogeneity classes for countable structures.} \label{infhierpic}
\end{figure}
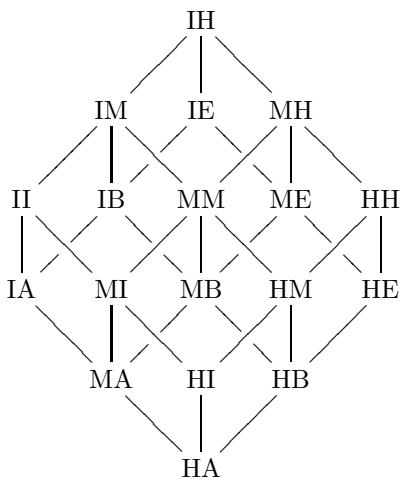

Note that previously in \cite{Cameron1, Lockett1}, $\II$ was used to denote the classical notion of homogeneity, that is that every finite partial 
automorphism (isomorphism between finite substructures) extends to an automorphism. In our new notation this actually corresponds to $\IA$, whereas 
$\II$ only says every finite partial embedding (isomorphism between finite substructures) extends to an embedding (not necessarily surjective). 
However, if we are only considering countable structures (which was indeed the case in previous papers), then we avoid any possible confusion because 
these two notions actually coincide---for countable structures, if a map extends to an embedding, then it can be extended to a surjective embedding 
(i.e.\ an automorphism).

\begin{lemma} \label{1.1}
For countable structures, $\II = \IA$, $\MI = \MA$, $\HI = \HA$.
\end{lemma}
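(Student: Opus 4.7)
The reverse inclusions are immediate since every automorphism is in particular an embedding, so the content lies in the three forward directions. My plan is to prove $\II\Rightarrow\IA$ by a standard back-and-forth argument, and then to reduce the other two cases to it via the observation that under $\MI$ (respectively $\HI$) every finite partial monomorphism (respectively endomorphism) is automatically a finite partial embedding.

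For $\II\Rightarrow\IA$, fix a finite partial embedding $f_0$ and an enumeration $A=\{a_0,a_1,\ldots\}$. I would inductively build a chain $f_0\subseteq f_1\subseteq f_2\subseteq\cdots$ of finite partial embeddings, alternating forth steps (which put the next $a_i$ into the domain) and back steps (which put the next $a_i$ into the range). At a forth step, apply $\II$ to $f_n$ to extend it to a total embedding $g:\mathcal A\to\mathcal A$, then set $f_{n+1}=f_n\cup\{(a,g(a))\}$ for the appropriate $a$. At a back step, first note that $f_n^{-1}$ is itself a finite partial embedding, since the inverse of an isomorphism between finite substructures is an isomorphism; apply $\II$ to $f_n^{-1}$ to obtain a total embedding $h\supseteq f_n^{-1}$, let $b=h(a)$ for the appropriate $a$ not in $\mathrm{ran}(f_n)$, and set $f_{n+1}=f_n\cup\{(b,a)\}$. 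The union $\bigcup_n f_n$ will then be a bijection of $A$ preserving all relations and their negations, hence an automorphism extending $f_0$.

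For $\MI\Rightarrow\MA$ and $\HI\Rightarrow\HA$, if $f$ is a finite partial monomorphism (respectively finite partial endomorphism) and $g$ is an embedding extending $f$, then $f$ coincides with the restriction of $g$ to $\mathrm{dom}(f)$, and this restriction is itself an embedding of the induced substructure into $\mathcal A$ (injectivity comes for free in the $\HI$ case from $g$ being injective). Consequently, under $\MI$ the classes of finite partial monomorphisms and finite partial embeddings coincide, and similarly under $\HI$ for endomorphisms. This reduces both cases to the previous paragraph: $\mathcal A$ is in particular $\II$ and hence $\IA$, and since the class of maps to be extended is the same, we obtain $\MA$ and $\HA$ respectively.

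The only delicate point in the whole argument is the back step of the back-and-forth, where one must verify that adding $(b,a)$ to $f_n$ preserves both injectivity and the finite partial embedding property; both follow uniformly from the fact that $f_{n+1}$ sits inside $h^{-1}$, which is an isomorphism from $h(\mathcal A)$ onto $\mathcal A$.
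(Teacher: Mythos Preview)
Your proof is correct and follows essentially the same approach as the paper's: the back step for $\II\Rightarrow\IA$ is obtained by applying $\II$ to the inverse of the current finite partial embedding, exactly as you do, and the reductions $\MI\Rightarrow\MA$ and $\HI\Rightarrow\HA$ are handled via the same observation that under $\MI$ (respectively $\HI$) every finite partial monomorphism (respectively endomorphism) is already a finite partial embedding. The only difference is cosmetic: the paper presents only the back step explicitly and leaves the forth step (and hence totality of the union) to the reader, whereas you spell out both halves of the back-and-forth.
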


\begin{proof}
Clearly $\IA \subseteq \II$, $\MA \subseteq \MI$, $\HA \subseteq \HI$.

We show that if countable $\cal S$ is $\II$, then in fact $\cal S$ is $\IA$. Let $p: A \to B$ be a finite partial embedding of $\cal S$ into $\cal S$ 
(that is, $p$ is an isomorphism between the finite substructures $A, B$ of $\cal S$). If $b \in \mathcal{S} \setminus B$, we show that we can extend 
$p$ to a finite partial embedding $q$ such that $b \in range(q)$. Clearly $p^{-1}$ is also a partial embedding of $\cal S$ into $\cal S$, and so since 
$\cal S$ is $\II$, $p^{-1}$ extends to an embedding $\psi$ of $\cal S$ into $\cal S$. Let $a:= \psi(b)$. Then $p^{-1} \cup \{ (b,a) \}$ is a partial 
embedding, and hence so is $p \cup \{ (a,b) \}$. Thus since $\cal S$ is countable, we can successively extend any given finite partial embedding to 
include every point in the range, and the union of these maps is a surjective embedding, that is, an automorphism. So $\cal S$ is $\IA$.

Now note that $\cal S$ is $\MI$ ($\MA$) if and only if $\cal S$ is $\II$ ($\IA$) and every finite partial monomorphism of $\cal S$ into $\cal S$ is 
an isomorphism. But since $\II$ and $\IA$ coincide by the above, $\MI$ and $\MA$ must also coincide. Similarly, $\cal S$ is $\HI$ ($\HA$) if and only 
if $\cal S$ is $\II$ ($\IA$) and every finite partial homomorphism of $\cal S$ into $\cal S$ is an isomorphism; so $\HI$ and $\HA$ also coincide.
\end{proof}

\begin{figure}[h!tb]				
\hspace{3.9cm}
\xymatrix @!0 @dr	{
\IH \ar@{-}[rr] \ar@{-}[dd] \ar@{-}[dr] & & \MH \ar@{-}[rr] \ar@{-}[dd] \ar@{-}[dr] & & \HH \ar@{-}[dd] \ar@{-}[dr] \\
 & \IE \ar@{-}'[r][rr] \ar@{-}'[d][dd] & & \ME \ar@{-}'[r][rr] \ar@{-}'[d][dd] & & \HE \ar@{-}[dd] \\
\IM \ar@{-}[rr] \ar@{-}[dr] & & \MM \ar@{-}[rr] \ar@{-}[dr] & & \HM \ar@{-}[dr] \\
& \IB \ar@{-}[rr] \ar@{-}[dd] & & \MB \ar@{-}[rr] \ar@{-}[dd] & &  \HB \ar@{-}[dd] \\
\\
& \II = \IA \ar@{-}[rr] & & \MI = \MA \ar@{-}[rr] & & \HI = \HA 	}
\caption{Modified hierarchy picture of the homomorphism-homogeneity classes for countable structures.} \label{infhierpic2}
\end{figure}
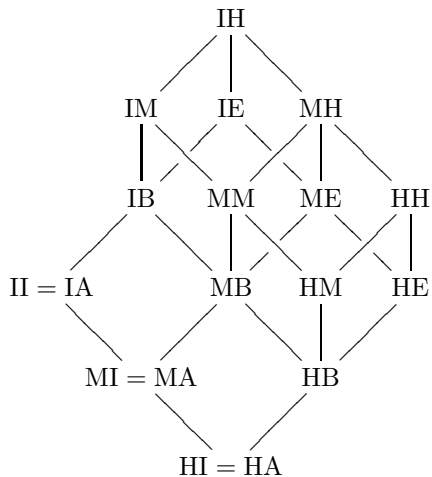

Meanwhile, none of the other new notions coincide with their non-surjective extension counterparts for all types of countable structures. We may 
demonstrate this by a single example, which is a strict partial order, taken to be a tree $T$ made up of three `branches' $A,B,C$ that are each 
isomorphic to $(\Q, <)$, such that branches $B,C$ are incomparable, and $A < B, C$. Then $T$ is $\HH$ and $\MM$ (by the classification of such countable posets, see \cite{Cameron1}), and so also $\MH$, 
$\IM$, and $\IH$. But we may easily see that $T$ is not $\IE$ (and so also not $\IB, \ME, \MB$, nor $\HE$) by considering the isomorphism $f$ that 
sends a single point $a \in A$ to a point $b \in B$. Note that every point in $T$ is comparable to $a$, and so each must be mapped to a point 
that is comparable to $b$. So no point $c \in C$ (which is incomparable to $b$) can ever be in the range of a homomorphism extension of $f$. Therefore 
there are no surjective extensions; that is, there are no epimorphisms of $T$ that extend $f$. Thus for partial orders, $\IE \subset \IH$, 
$\IB \subset \IM$, $\ME \subset \MH$, $\MB \subset \MM$, and $\HE \subset \HH$. Finally, consider a relational language with two 
binary relations, one which defines a complete graph on the points, and the other defines a partial order. Now consider the structure $T'$ in this 
relational language, which looks exactly like $T$ if we ignore the graph relation. Then $T'$ is $\MM$ since $T$ and all complete graphs are $\MM$; 
each homomorphism between substructures of $T'$ must be injective (that is, each finite homomorphism is a monomorphism), since they must preserve the 
graph relation; so $T'$ is $\HM$. (Note that $T$ itself is not $\HM$, we need the graph relation as well to ensure all homomorphisms are injective.) 
But for instance $T'$ is not $\IE$ since $T$ is not $\IE$, and so $T'$ cannot be $\HB$. Thus for relational structures in this language, 
$\HB \subset \HM$. 

If we only consider finite structures, then it is easy to see that many of the homomorphism-homogeneous classes coincide. Observe that if $f$ is an endomorphism of a finite structure $S$, which is either injective or surjective, then $f$ is actually an automorphism. So looking to extend partial maps of finite structures to epimorphisms ($\mathrm{E}$), monomorphisms ($\mathrm{M}$), bimorphisms ($\mathrm{B}$), and embedddings ($\mathrm{I}$), is the same as extending to automorphisms ($\mathrm{A}$). Thus $\mathrm{XA} = \mathrm{XI} = \mathrm{XB} = \mathrm{XM} = \mathrm{XE}$ for $\mathrm{X} \in \{ \mathrm{I, H, M} \}$, and so the hierarchy picture greatly reduces, to that shown in Figure~\ref{finhierpic}. 

\begin{figure}[h!tb]				
\hspace{4.5cm}
\xymatrix @=2.5pc @dr	{
\IH \ar@{-}[r] \ar@{-}[d] & \MH \ar@{-}[r] \ar@{-}[d] & \HH \ar@{-}[d] \\
\IA \ar@{-}[r] & \MA \ar@{-}[r] & \HA 		}
\caption{Hierarchy picture of the homomorphism-homogeneity classes, for finite structures.} \label{finhierpic}
\end{figure}
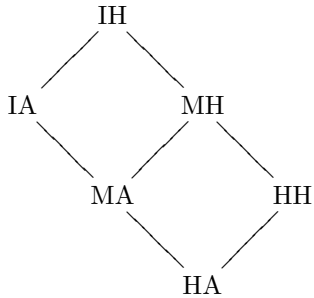

In the remainder of the paper we concentrate on three particular topics related to the general theme. In the first, in section 
2, we refine the work of \cite{Cameron1} in which the countable homomorphism homogeneous posets were described. We are able to 
say exactly which members of that classification fall into the various newly defined classes introduced here. Since Schmerl's 
list of the countable $\IA$ partial orders features prominently in this work, we here recall what his classification is. It  
comprises the following four kinds of partial order:

an antichain of size $n$, written $A_n$,

an `antichain of $n$ chains', written $B_n$, which is $n$ `copies' of the rationals 
$\mathbb Q$, which may be formally defined as $n \times {\mathbb Q}$ under the relation 
$(i, q) < (j, r) \Leftrightarrow i = j$ and $q < r$,

a chain of antichains, written $C_n$, which is $\mathbb Q$ copies of an antichain of size $n$, formally
${\mathbb Q} \times n$ under the relation $(q, i) < (r, j) \Leftrightarrow q < r$,

(where in each of these three cases, $1 \le n \le \aleph_0$), 

the generic partial order $U$, being the unique countable $\IA$ poset in which all finite partial orders embed. 

Then in section 3, we specialize to the case of linear orders, and extend work begun in \cite{Lockett2}. In that paper we were concerned with the `generic' endomorphisms, and the overall assumption was that the structure was homomorphism-homogeneous in one of the current senses, and in addition, homogeneous (meaning $\IA$-homogeneous). So for linear orders this meant that we were considering just $\mathbb Q$. We take the opportunity here of describing more fully the conjugacy classes of endomorphisms in this case and more generally (whereas in \cite{Lockett2} this was only done for generic ones). Finally in section 4 we give further details about generic endomorphisms of trees, as promised in \cite{Lockett2}.

\section{Posets}

Recall that for partial orders there are two kinds of homomorphism---strict order ($<$) preserving, and the weaker nonstrict order ($\le$) preserving. We can incorporate these into a bigger hierarchy picture of the homomorphism-homogeneous classes (see Figure~\ref{infposethierpic}), where we use $\mathrm{H, E}$ to denote strict order preserving homomorphisms and epimorphisms, and $\mathrm{\overline{H}, \overline{E}}$ to denote nonstrict order preserving homomorphisms and epimorphisms. 

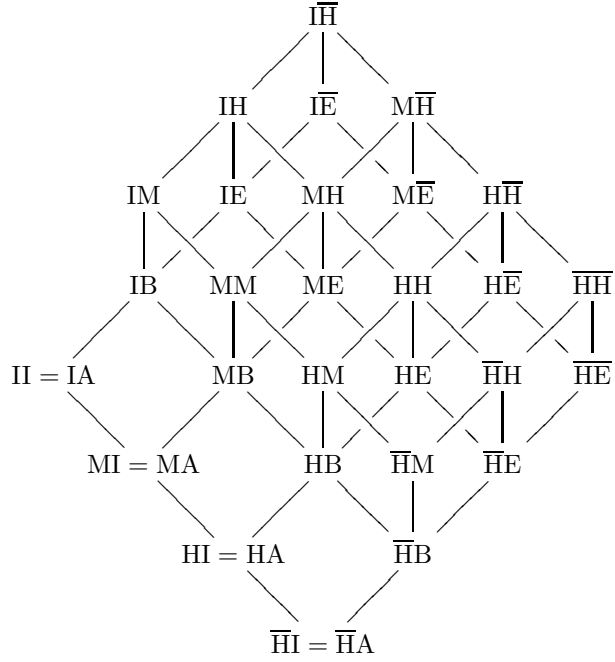
\begin{figure}[h!tb]				
\hspace{3cm}
\xymatrix @!0 @dr	{
\IHb \ar@{-}[rr] \ar@{-}[dd] \ar@{-}[dr] & & \MHb \ar@{-}[rr] \ar@{-}[dd] \ar@{-}[dr] & & \HHb \ar@{-}[rr] \ar@{-}[dd] \ar@{-}[dr] & & \HbHb \ar@{-}[dd] \ar@{-}[dr]\\
 & \IEb \ar@{-}'[r][rr] \ar@{-}'[d][dd] & & \MEb \ar@{-}'[r][rr] \ar@{-}'[d][dd] & & \HEb \ar@{-}'[r][rr] \ar@{-}'[d][dd] & & \HbEb \ar@{-}[dd] \\
\IH \ar@{-}[rr] \ar@{-}[dd] \ar@{-}[dr] & & \MH \ar@{-}[rr] \ar@{-}[dd] \ar@{-}[dr] & & \HH \ar@{-}[rr] \ar@{-}[dd] \ar@{-}[dr] & & \HbH \ar@{-}[dd] \ar@{-}[dr] \\
 & \IE \ar@{-}'[r][rr] \ar@{-}'[d][dd] & & \ME \ar@{-}'[r][rr] \ar@{-}'[d][dd] & & \HE \ar@{-}'[r][rr] \ar@{-}'[d][dd] & & \HbE \ar@{-}[dd] \\
\IM \ar@{-}[rr] \ar@{-}[dr] & & \MM \ar@{-}[rr] \ar@{-}[dr] & & \HM \ar@{-}[rr] \ar@{-}[dr] & & \HbM \ar@{-}[dr] \\
& \IB \ar@{-}[rr] \ar@{-}[dd] & & \MB \ar@{-}[rr] \ar@{-}[dd] & &  \HB \ar@{-}[rr] \ar@{-}[dd] & & \HbB \ar@{-}[dd] \\
\\
& \II = \IA \ar@{-}[rr] & & \MI = \MA \ar@{-}[rr] & & \HI = \HA \ar@{-}[rr] & & \HbI = \HbA 	}
\caption{Modified hierarchy picture of the homomorphism-homogeneity classes for countable posets.} \label{infposethierpic}
\end{figure}

The classes of countable $\IHb$ and $\IH$ posets (and other classes involving just $\mathrm{I,M,H, \overline{H}}$) were classified in \cite{Cameron1}; so now we may go through these classifications and determine the new classes. Using the notation in \cite{Cameron1}, an \emph{$\times$-set} is a poset on four elements $\{ a_1, a_2, b_1, b_2 \}$ with $a_1 \parallel a_2$, $b_1 \parallel b_2$, and $\{a_1, a_2\} < \{b_1, b_2\}$, and we say that $c$ is a \emph{midpoint} of this $\xset$ if $\{a_1, a_2\} < c < \{b_1, b_2\}$.

\begin{theorem}[Prop 25, Cor 26 from \cite{Cameron1}] \label{IHbpclass} 
A countable poset $P$ is $\IHb$ if and only if it is one of the following:
\begin{enumerate}
\item[(1)] a disjoint union of a finite or countably infinite number of incomparable countable chains (possibly of different lengths, including 
trivial chains);
\item[(2)] a tree (or inverted tree);
\item[(3)] a poset such that all finite subsets have upper and lower bounds, and every $\xset$ has a midpoint;
\item[(4)] a poset such that all finite subsets have upper and lower bounds, and no $\xset$ has a midpoint.
\end{enumerate}
Furthermore, each of these is actually $\HbHb$.
\end{theorem}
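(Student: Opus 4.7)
My plan is to split the theorem into two directions: an easier ``each listed class is $\HbHb$'' implication (which immediately yields the furthermore clause, and via $\HbHb \Rightarrow \IHb$ one direction of the equivalence), and a harder ``every $\IHb$ poset appears on the list'' converse.

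For the forward inclusions (each of (1)--(4) is $\HbHb$), I would use a one-point-extension, back-and-forth style argument. Given a finite partial nonstrict-order-preserving map $p : F \to G$ and any fresh point $x \in P$, I want to assign $x$ an image $y$ consistent with $p$. The constraints imposed by $p$ on $y$ all have the form ``$y$ is $\ge$ some prescribed finite set $L$, and $\le$ some prescribed finite set $U$, and possibly incomparable to a third set,'' so the task reduces in each case to realising these local demands. For (1), the only constraint is to stay within the chain containing $p$'s images that are comparable to $x$, which is either a dense/bounded copy of a countable chain or else unconstrained. For (2), the tree/meet structure pinpoints the branch on which $y$ must lie, and a branch of a tree always contains appropriate points. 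For (3) and (4), the ``every finite subset has upper and lower bounds'' hypothesis supplies a bound for $L \cup U$, and the midpoint (resp.\ no-midpoint) property precisely disposes of the one remaining awkward case where the constraints from $p$ look like an $\xset$ with required midpoint. Enumerating $P$ and iterating produces the desired total $\overline{\mathrm{H}}$-endomorphism.

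The converse direction is by contrapositive: I would assume $P$ is countable and $\IHb$, and progressively whittle down its possible shape. The first dichotomy is whether $P$ contains any pair of comparable elements at all, and whether every finite subset has both upper and lower bounds. Without global bounds, I would argue that $P$ decomposes as in (1) or has tree-like shape (2), using the fact that any ``extra'' comparability unaccounted for by a tree or disjoint-chain structure can be detected by a small partial embedding that cannot extend to a nonstrict endomorphism. When $P$ does have bounds for every finite subset, the final dichotomy is governed by the behaviour of $\times$-sets: I would show that the $\IHb$ condition forces \emph{all} $\xset$s to behave uniformly, by producing, whenever one $\xset$ has a midpoint and another does not, an isomorphism between the two as finite subposets (they are isomorphic as four-element posets) which admits no extension to a $\overline{\mathrm{H}}$-endomorphism because the midpoint in one cannot be mapped anywhere consistent in the other.

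The main obstacle I anticipate is this last step---forcing the $\xset$ dichotomy (3) versus (4) from $\IHb$-ness alone---together with ruling out intermediate hybrid configurations (posets that are ``almost a tree'' or ``almost bounded''). Each putative exception requires an explicit bad finite partial embedding, and the challenge is that nonstrict homomorphisms have a great deal of flexibility (they can collapse points to a common image), so the obstructing finite configurations have to be chosen so that even collapsing does not rescue the extension. I would expect the bookkeeping for this to be the most delicate part, and would likely handle it by isolating a short list of forbidden four- or five-element subposets whose presence in $P$ immediately witnesses a failure of $\IHb$.
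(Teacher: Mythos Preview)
The paper does not contain a proof of this theorem at all: it is stated with the attribution ``Prop~25, Cor~26 from \cite{Cameron1}'' and simply quoted as background for the new classification work in Section~2. So there is no ``paper's own proof'' to compare your proposal against; the argument you would need to check yours against lives in Cameron--Lockett, \emph{Posets, homomorphisms and homogeneity}, Discrete Math.\ \textbf{310} (2010), 604--613.

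That said, your outline is broadly in the right spirit for how such classifications are obtained, but the case split in the converse direction is not organized the way the original argument runs, and as written it has a gap. You phrase the first dichotomy as ``whether every finite subset has both upper and lower bounds,'' and then say that without global bounds $P$ must be of type~(1) or~(2). This is not how the split works, and it is where you would get stuck: the actual trichotomy is driven by which of the three-element posets $\mathrm{V}$ and $\Lambda$ embed in $P$. If neither embeds, $P$ is a disjoint union of chains; if exactly one embeds, $P$ is a tree or inverted tree; if both embed, then one \emph{derives} from $\IHb$ that every finite subset has upper and lower bounds, and only then does the $\times$-set midpoint dichotomy kick in. Your plan tries to use the presence or absence of bounds as the primary splitting criterion, but boundedness is a \emph{consequence} of $\IHb$ plus ``both $\mathrm{V}$ and $\Lambda$ embed,'' not an independent hypothesis, and you have not indicated how you would prove that implication. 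Without it, you cannot rule out a connected $\IHb$ poset that embeds both $\mathrm{V}$ and $\Lambda$ yet fails to have (say) upper bounds for some pair --- exactly the ``hybrid configurations'' you flag as worrying. The fix is to reorganize around the $\mathrm{V}/\Lambda$ embedding trichotomy and then prove the boundedness lemma directly from $\IHb$.
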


\begin{theorem}[Prop 15, Cor 16 from \cite{Cameron1}] \label{IHpclass}
A countable poset $P$ is $\IH$ if and only if it is one of the following:
\begin{enumerate}
\item[(1)] an antichain $A_n$ on $n$ points, $n$ finite or countably infinite;
\item[(2)] a disjoint union of a finite or countably infinite number of copies of $(\Q,<)$;
\item[(3)] a tree with no minimum element such that for all finite $Q \subseteq P$, $P_{(<Q)}:= \{ x \in P: x < Q \}$ has no maximal elements (or the inversion of such a tree);
\item[(4)] an extension of the generic partial order $U$ (that is, $P$ satisfies the property that for any finite $A, B \subseteq P$ with $A<B$ there is $z \in P$ with $A<z<B$);
\item[(5)] a poset such that for all finite $Q \subseteq P$, $P_{(<Q)}$ is nonempty and has no maximal elements, $P_{(>Q)} := \{ x \in P: x > Q \}$ is nonempty and has no minimal elements, and no $\xset$ has a midpoint.
\end{enumerate}
Furthermore, each of these is actually $\HH$ and $\MM$.
\end{theorem}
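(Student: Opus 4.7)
My plan is to take the statement as established by [Cameron1, Prop 15, Cor 16], which gives both the characterization and the stronger conclusion that each listed poset is $\HH$ and $\MM$; the bulk of the work therefore consists of reconstructing that argument, which I would organise as follows.

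For the backward direction together with the ``furthermore'' claim, I would verify case by case that each of the five structures is both $\HH$ and $\MM$. In (1) there are no non-trivial comparabilities, so every map is a homomorphism and a direct back-and-forth extends any finite partial injection to an automorphism of $A_n$. In (2), within each copy of $\Q$ I would use Cantor-style back-and-forth to extend order-preservingly, and handle different copies by either merging source copies order-preservingly into a chosen target copy or mapping a source copy wholesale into a fresh target copy. In (3), the hypothesis that $P_{(<Q)}$ is non-empty and has no maximal elements is precisely the ingredient needed to find a new element below any prescribed finite image set, so the extension proceeds one vertex at a time through the tree. Case (4) is immediate from the defining extension axiom of the generic poset $U$, which supplies a witness for each new vertex with prescribed comparabilities to the points already mapped. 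Case (5) combines the non-emptiness and extremality-free assumptions on $P_{(<Q)}$ and $P_{(>Q)}$ with the absence of midpoints in $\xset$ configurations to give an analogous extension axiom that supports a back-and-forth, with the $\xset$ clause ensuring that no partial map is obstructed by an unfillable midpoint requirement.

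For the forward direction, I would analyse a countable $\IH$ poset $P$ by applying $\IH$-homogeneity to carefully chosen finite partial embeddings to force a global structure from local data. The first dichotomy is whether $P$ has any comparabilities at all (giving (1)); the second is whether every connected component is a chain (case (2)); the third is whether $P$ is a tree or inverted tree (case (3)); and the remaining cases (4) and (5) are separated by whether every $\xset$ in $P$ carries a midpoint. At each step, $\IH$-homogeneity applied to the relevant partial embeddings forces the claimed structural condition: for instance, since any two $\xset$-configurations in $P$ are isomorphic as posets, the map swapping one for another is a finite partial embedding, and its extension to an endomorphism forces the midpoint property to be uniform across $P$; analogous swap arguments pin down the $P_{(<Q)}$ and $P_{(>Q)}$ side conditions.

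The main obstacle is the forward direction, specifically the identification of cases (3) and (5) and the verification that the side conditions on $P_{(<Q)}$, $P_{(>Q)}$, and midpoints of $\xset$ configurations are exactly those forced by $\IH$-homogeneity rather than only implied by it. A secondary difficulty is that the extension arguments for (3) and (5) in the backward direction require more care than those for (2) and (4), since no direct Fra\"iss\'e universal extension axiom is available and the extension must be built from the structural bound-conditions alone.
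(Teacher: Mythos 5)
The paper does not actually prove this theorem: it is quoted verbatim as Prop 15 and Cor 16 of \cite{Cameron1}, so your plan to take it as established by that citation is exactly what the paper does. Your sketched reconstruction of the cited argument (backward direction by case-by-case back-and-forth extensions, forward direction by a case analysis in which $\IH$-homogeneity is applied to swap-type finite partial embeddings, e.g.\ to force the $\xset$-midpoint property to be uniform) is consistent with how the proof in \cite{Cameron1} proceeds, though as you note it remains a sketch rather than a full argument.
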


In part (4) here, by definition an `extension' of $U$ is a poset $P$ having the same domain, and such that $x \le y$ in $U$ implies $x \le y$ in $P$.
It can be seen (see \cite{Cameron1} Prop 13) that $P$ is an extension of $U$ if and only if for all finite $Q \subseteq P$, $P_{(<Q)}$ is nonempty and 
has no maximal elements, $P_{(>Q)}$ is nonempty and has no minimal elements, and every $\xset$ has a midpoint.

Before looking specifically at the families in these classifications, we produce some preliminary results for $\IEb$ posets. 
Recall that a poset $P$ is \emph{dense} if for all $x,y \in P$ there is $z \in P$ with $x<z<y$; and \emph{without endpoints} if there are no maximal or minimal elements (that is, for all $z \in P$ there are $x,y \in P$ with $x < z < y$). We first show that all $\IEb$ posets with some comparable points are dense and without endpoints; so in particular, apart from finite antichains, these are all infinite (note that this is not the case for $\IHb$ posets).

\begin{lemma} \label{densewoendpt}
If $P$ is a countable $\IEb$ poset which is not an antichain, then $P$ is dense and without endpoints.
\end{lemma}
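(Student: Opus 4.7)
The plan is to prove each of the three conclusions---no maximum/minimum, density, and no maximal/minimal elements---separately by contradiction, using carefully chosen finite partial embeddings and the $\IEb$ property.

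First I would show $P$ has no maximum. If $m = \max P$, then for any $\mathrm{\overline{E}}$-morphism $f \colon P \to P$ we have $f(m) \ge f(w)$ for every $w \in P$, so $f(m)$ is an upper bound of $f(P) = P$, forcing $f(m) = m$. But $P$ is not an antichain, so $|P| \ge 2$; picking any $p \ne m$, the partial embedding $\{(m, p)\}$ would extend by $\IEb$ to a $\mathrm{\overline{E}}$-morphism sending $m$ to $p \ne m$, a contradiction. The dual argument excludes a minimum.

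For density, suppose $x < y$ is a cover pair in $P$ (no element strictly between). The key application of $\IEb$ is to a partial embedding $\{(x, a), (y, b)\}$ where $a < b$ is a comparable pair of $P$ admitting an intermediate element $c$ with $a < c < b$. In any extension $f$, surjectivity forces some $s \in P$ with $f(s) = c$, and the inequalities $c > a = f(x)$ and $c < b = f(y)$ combined with $\le$-preservation give $s \not\le x$ and $s \not\ge y$. In the chain case this immediately forces $x < s < y$, contradicting the cover. In general one also needs to exclude $s \parallel x$ or $s \parallel y$, which I would attempt either by enlarging the partial embedding with extra targets that block those escape routes, or by appealing to the fact that $\IEb$ implies $\IHb$ and using Theorem~\ref{IHbpclass} to reduce to the four types.

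For no maximal elements, I would assume $m$ is maximal but, by the previous step, not a maximum, so some $q \in P$ satisfies $q \parallel m$. Taking a comparable pair $u < v$ in $P$, apply $\IEb$ to $\{(u, m)\}$. The extension $f$ has $f(u) = m$ and, by the maximality of $m$, $f(w) = m$ for every $w \ge u$. Surjectivity forces $q = f(s)$ for some $s$, with $s \not\ge u$ (else $f(s) = m \ne q$) and $s \not\le u$ (else $f(s) \le m$, but $q \not\le m$), so $s \parallel u$. The plan is then to combine this with a second, larger partial embedding (for instance $\{(u, m), (s', q)\}$ for a suitable $s' \parallel u$) to pin down images that have nowhere to go, giving the contradiction; the dual argument handles minimal elements.

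The main obstacle I foresee is the non-chain case throughout: incomparable elements give $\mathrm{\overline{E}}$-morphisms extra flexibility that a single partial embedding does not obviously close off. I expect the cleanest route is to leverage the already-established fact that $P$ is $\IHb$ and then appeal to Theorem~\ref{IHbpclass} to restrict $P$ to one of the four types, verifying density and absence of maximal/minimal elements in each case rather than trying to carry out a uniform single-embedding argument.
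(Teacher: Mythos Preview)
The paper's argument is far shorter and more direct than your plan: it fixes any comparable pair $a<b$, sends an arbitrary $z$ to $b$, extends by $\IEb$ to a surjective $\le$-map $f$, and asserts that any preimage $x$ of $a$ satisfies $x<z$ (dually for $y>z$); once a $3$-chain $a<b<c$ is thus available, density is handled identically by sending a given pair $x<y$ to $a,c$ and taking a preimage of $b$. There is no separation of ``maximum'' from ``maximal'', no enlarged partial embedding, and no appeal to the $\IHb$ classification.

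You have, however, correctly identified the one soft spot in this short argument: from $f(x)=a<b=f(z)$ and $\le$-preservation one only deduces $x\not\ge z$, so the possibility $x\parallel z$ is not ruled out, and the paper's proof simply does not address this. Your proposed remedies---richer partial embeddings, or a case split over the four $\IHb$ types of Theorem~\ref{IHbpclass}---are a reasonable response, but they are not yet a proof either. In particular, in type~(2) (trees) the classification by itself does not rule out maximal elements or covers under the bare $\IEb$ hypothesis, so that branch of your case analysis would still require substantive work rather than a routine check.
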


\begin{proof}
Since $P$ is not an antichain, there are $a,b \in P$ with $a < b$. 
Then for each $z \in P$, the map $z \mapsto b$ must extend to a surjective map, so there is some $x \in P$ with $x < z$. Similarly for all $z \in P$ there is $y \in P$ with $z < y$. Hence $P$ is without endpoints.  
Now there are $a,b,c \in P$ with $a < b < c$. So for any $x, y \in P$ with $x < y$, the map $x \mapsto a, ~y \mapsto c$ must extend to a surjective map, so there is some $z \in P$ with $x < z < y$.
Therefore $P$ is dense.
\end{proof}

Next we see that if $P$ is a $\IEb$ poset with some incomparable points, then for each $x \in P$ there is $y \in P$ with $x \parallel y$. 

\begin{lemma} \label{xincpt}
If a countable poset $P$ which is not a chain is $\IEb$, then for each $x \in P$ there is $y \in P$ such that $x \parallel y$.
\end{lemma}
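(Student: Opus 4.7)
The plan is as follows. Since $P$ is not a chain, pick any two elements $u, v \in P$ with $u \parallel v$. For an arbitrary $x \in P$, the one-element map $\{(x, u)\}$ is automatically a partial embedding of $P$ into $P$, since a singleton domain carries no nontrivial order relations to preserve or reflect. Applying the $\IEb$ hypothesis, extend this partial embedding to a surjective nonstrict order-preserving endomorphism $f : P \to P$ with $f(x) = u$. Now by surjectivity of $f$, there is some $y \in P$ with $f(y) = v$; I claim this $y$ is incomparable to $x$, which is exactly what the lemma requires.

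To verify the claim I would rule out each possible comparability in turn. If $y = x$, then $v = f(y) = f(x) = u$, contradicting $u \parallel v$. If $y \le x$, then nonstrict monotonicity of $f$ yields $v = f(y) \le f(x) = u$, again contradicting $u \parallel v$; the case $y \ge x$ is symmetric, forcing $v \ge u$. Hence $y \parallel x$, as required.

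I do not expect any real obstacle. The only design choice that requires a moment's thought is to map $x \mapsto u$ rather than trying $u \mapsto x$: the latter approach would force us to locate an element of $P$ incomparable to $x$ to serve as the image of $v$, but that is precisely the conclusion we are trying to establish. Sending $x$ to $u$ instead, and then pulling $v$ back via surjectivity, neatly sidesteps this issue and reduces the proof to the short case analysis above. Note that Lemma~\ref{densewoendpt} is not needed here; the argument uses only the defining extension property of $\IEb$ together with the fact that nonstrict order-preserving maps respect $\le$.
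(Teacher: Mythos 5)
Your proposal is correct and is essentially the paper's own argument, just phrased directly rather than by contradiction: the paper assumes $x$ is comparable to every point, sends $x$ to one member $a$ of an incomparable pair $a \parallel b$, and notes that $b$ can then never enter the range of an extension, while you extend $x \mapsto u$ to an epimorphism and pull back $v$ to obtain the incomparable witness. The underlying mechanism (order preservation forces every image of a point comparable to $x$ to be comparable to $u$) is identical in both versions.
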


\begin{proof}
Suppose for a contradiction that there is a point $x \in P$ such that for all $z \in P$, $x,z$ are comparable (that is, $x \nparallel z$). Since $P$ is not a chain, there are points $a,b \in P$ with $a \parallel b$. Consider the isomorphism which maps $x$ to $a$. Since all points of $P$ are comparable to $x$, this map cannot be extended to a homomorphism which includes $b$ in the range; and so $P$ is not $\IEb$.
\end{proof}

\subsection{Disjoint unions of chains}

By results in \cite{Cameron1}, any disjoint union of chains is $\HbHb$, while only antichains and disjoint unions of copies of $(\Q,<)$ are $\IH$. We see that these are also the only $\IEb$ cases.

\begin{lemma}
A countable poset $P$ which is a disjoint union of chains is $\IEb$ if and only if it is either an antichain, or all chains are copies of $(\Q, <)$.
\end{lemma}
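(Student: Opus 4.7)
The plan is to split into two directions and handle the straightforward ``if'' direction first. For $(\Leftarrow)$, if $P$ is an antichain, I note that on an antichain $x \le y$ iff $x = y$, so \emph{every} function $P \to P$ is a nonstrict homomorphism; the task then reduces to extending a finite partial embedding to a bijection $P \to P$, which is a routine countable back-and-forth. If $P$ is a disjoint union of copies of $(\Q, <)$, then $P$ is (isomorphic to) one of Schmerl's homogeneous posets $B_n$ for some $1 \le n \le \aleph_0$, and is therefore $\IA$-homogeneous; any automorphism extending a given finite partial embedding is in particular a (strict, and hence nonstrict) epimorphism.

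For $(\Rightarrow)$, I would suppose $P$ is $\IEb$ and not an antichain. The starting point is Lemma~\ref{densewoendpt}, which immediately yields that $P$ is dense and without endpoints. The next step is to transfer these two global properties to each chain component $C$: for any $z \in C$, the ``no endpoints'' property of $P$ supplies $x, y \in P$ with $x < z < y$, and comparability with $z$ forces $x, y$ to lie in the same chain component, i.e.\ $x, y \in C$, so $C$ has no maximum or minimum; the same trick using density shows that $C$ is itself dense. Since $C$ is countable, dense, and has no endpoints, the classical Cantor characterization gives $C \cong (\Q, <)$, which is what we need.

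The argument is essentially routine and contains no substantial obstacle. The one genuinely content-bearing step is the observation in the $(\Rightarrow)$ direction that the global density and endpoint-freeness handed to us by Lemma~\ref{densewoendpt} localize to each chain component, which is precisely where the disjoint-union hypothesis is used (so that the witnesses produced for a point $z$ are forced back into $z$'s own chain).
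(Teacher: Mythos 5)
Your proof is correct and follows essentially the same route as the paper: the forward direction rests on Lemma~\ref{densewoendpt} (with the localization to each chain component and Cantor's characterization, which the paper leaves implicit), and the converse uses that antichains and disjoint unions of copies of $(\Q,<)$ are $\IA$-homogeneous, hence $\IEb$. The only difference is that you verify the antichain case by a direct back-and-forth instead of citing homogeneity, which is a harmless elaboration.
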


\begin{proof}
If $P$ is not an antichain, then by Lemma~\ref{densewoendpt} it is dense without endpoints. So if $P$ is a disjoint union of chains, then each chain 
is a copy of $(\Q, <)$.

Every finite or countably infinite antichain, and every disjoint union of a finite or countably infinite set of copies of $(\Q, <)$ is $\IA$, so in 
particular these are certainly $\IE$ and $\IEb$.
\end{proof}

First we determine exactly which antichains lie in which subclasses of $\IEb$. By results in \cite{Lockett1}, any countable antichain is $\MA$ and $\HbH$, but the only antichain which is $\HM$ is the trivial one, $A_1$, which is actually $\HbA$. We now show that the only nontrivial antichain which is $\HEb$ is the infinite one, which is in fact $\HbE$.

\begin{lemma}
The countably infinite antichain, $A_\omega$, is $\HbE$, but no nontrivial finite antichain is $\HEb$. 
\end{lemma}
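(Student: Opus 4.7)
The plan is to exploit the fact that on any antichain the strict and nonstrict order relations are trivial on distinct points, so all four homomorphism-homogeneity notions $\HE$, $\HEb$, $\HbE$, $\HbEb$ collapse into the single combinatorial requirement ``every finite partial self-map extends to a surjective self-map''. I would begin by making this degeneracy observation precise: for $x \ne y$ in an antichain $A$ we have neither $x < y$ nor $x \le y$, so the preservation conditions on a map are vacuous, which means every (finite partial or total) self-map of $A$ is simultaneously an $H$-morphism and an $\bar H$-morphism, and every surjective self-map is simultaneously an $E$-epimorphism and an $\bar E$-epimorphism. In particular $\HbE$ and $\HEb$ reduce to the same condition on antichains, so because $\HbE$ is the strongest and $\HEb$ the weakest of the four classes (via $\HbE \Rightarrow \HE \Rightarrow \HEb$ and $\HbE \Rightarrow \HbEb \Rightarrow \HEb$), proving $A_\omega \in \HbE$ and $A_n \notin \HEb$ for $n \ge 2$ gives the strongest possible statements in each direction.

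For the positive half, I would take an arbitrary finite partial map $p : F \to G$ between finite subsets of $A_\omega$; then $A_\omega \setminus F$ is countably infinite, so I pick any surjection $\sigma : A_\omega \setminus F \to A_\omega$ and define $f := p \cup \sigma$. This $f$ is a surjective self-map of $A_\omega$, and by the degeneracy observation it is an $E$-epimorphism extending $p$, witnessing that $A_\omega$ is $\HbE$.

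For the negative half, I would produce a single counterexample: fix distinct $a,b \in A_n$ (possible as $n \ge 2$) and let $p(a) = p(b) = a$. Then $p$ is a valid finite partial $H$-morphism (the strict order condition is vacuous), but any total extension $f : A_n \to A_n$ also identifies $a$ and $b$, so $|f(A_n)| \le n-1 < n$ and $f$ is not surjective; consequently $p$ has no extension to an $\bar E$-epimorphism, and $A_n$ is not $\HEb$.

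There is no genuine obstacle: once the degeneracy of the order on antichains is noted, both halves are almost immediate. The only thing requiring a little care is keeping track of which of the four classes one is verifying or refuting and appealing to the inclusions above so that the endpoints $\HbE$ and $\HEb$ actually give the strongest positive and the strongest negative assertions respectively.
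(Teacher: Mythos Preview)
Your proof is correct and follows essentially the same approach as the paper: both exploit that on an antichain the order relations impose no constraint, so that the positive half amounts to extending a finite partial map to a surjection (trivial when the complement of the domain is infinite) and the negative half amounts to exhibiting a non-injective finite partial map, which by cardinality cannot extend to a surjection of a finite set. You supply more detail---the explicit degeneracy observation, the explicit construction of the surjective extension via $p \cup \sigma$, and the remark about $\HbE$ and $\HEb$ being the extreme classes---but the underlying argument is the same as the paper's two-line proof.
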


\begin{proof}
Any finite partial endomorphism of $A_\omega$ can trivially be extended to a surjective one. 

Now suppose that $P$ is a finite nontrivial antichain, and consider a partial endomorphism $f$ such that $f(a) = f(b)$ where $a$ and $b$ are distinct members of $P$. Since $P$ is finite but $f$ is not injective, $f$ cannot be extended to a surjective map. So $P$ is not $\HEb$.
\end{proof}

So $A_1$ is in all classes; the family of nontrivial finite antichains $\{A_n: n \in \N, ~n > 1 \}$ is in $\MA$ and $\HbH$ (and all superclasses), but not in $\HM$ or $\HEb$ (or their subclasses); while $A_\omega$ is in $\MA$ and $\HbE$ (and all superclasses), but not in $\HM$ (or its subclasses).

Next we determine exactly which disjoint unions of copies of $(\Q, <)$ lie in which subclasses of $\IEb$. By results in \cite{Schmerl, Cameron1}, any disjoint union of copies of $(\Q,<)$ is $\IA, \MM, \HH, \HbHb$, but none is $\HbH$, and the only one which is $\MA$ or $\HM$ is the trival one $(\Q,<)$ which is actually $\HA$. We now show that $(\Q, <)$ is also $\HbEb$, and the only nontrivial disjoint union of copies of $(\Q,<)$ which is $\MEb$ is the infinite one, $B_\omega$, which is in fact $\MB, \HE, \HbEb$.

\begin{lemma}
The countably infinite union of copies of $(\Q,<)$ is $\MB, \HE, \HbEb$, and $(\Q, <)$ is $\HbEb$, but no nontrivial finite union of copies of $(\Q,<)$ is $\MEb$. 
\end{lemma}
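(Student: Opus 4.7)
The plan is to prove each of the three positive statements about $B_\omega$ and the statement about $(\mathbb{Q}, <)$ by back-and-forth, and to establish the negative claim for finite $B_n$ via an explicit counterexample.

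For $B_\omega$ being $\MB$, $\HE$, or $\HbEb$, I would enumerate $B_\omega = \{p_0, p_1, \ldots\}$ and extend the given finite partial map $f = f_0$ through stages $f_0 \subseteq f_1 \subseteq \cdots$, alternately ensuring each $p_n$ lies in the domain (forth step) and in the range (back step). In the forth step, if the new domain element $p$ lies in chain $C_i$, then monotonicity (strict or nonstrict) forces $f_n(X_n \cap C_i)$ into a single chain $C_j$, and density of $\mathbb{Q}$ lets us pick the new image $q$ in $C_j$ at an order-consistent position (avoiding $Y_n$ in the injective case $\MB$); if $X_n \cap C_i = \emptyset$, any valid $q$ will do. The key observation is in the back step: given $q$ needing a preimage, pick $p$ in a fresh chain $C_m$ disjoint from $X_n$. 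Such $C_m$ exists since $B_\omega$ has infinitely many chains while $X_n$ is finite, and then $p$ is incomparable to every $x \in X_n$, which vacuously discharges all preservation constraints (strict and nonstrict alike) and makes $p \notin X_n$ automatic. This one trick handles $\MB$, $\HE$, and $\HbEb$ uniformly, because comparability in $B_\omega$ is chain-internal.

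For $(\mathbb{Q}, <)$ being $\HbEb$, a finite partial nonstrict endomorphism is just a weakly increasing map between finite subsets of $\mathbb{Q}$. I would run back-and-forth. At each forth step the new image must lie in the closed interval $[\max\{f_n(x) : x < p\}, \min\{f_n(x) : x > p\}]$, which is nonempty by monotonicity (with the conventions $\max \emptyset = -\infty$, $\min \emptyset = +\infty$) and contains infinitely many rationals. At each back step, using that $q \notin Y_n$ so no $x \in X_n$ has $f_n(x) = q$, the analogous interval $(\max\{x : f_n(x) < q\}, \min\{x : f_n(x) > q\})$ has strict endpoints (if $f_n(a) < q < f_n(b)$ then $a \neq b$ and nonstrict monotonicity forces $a < b$) and is nonempty, so density gives some $p \notin X_n$ there.

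For $B_n$ ($1 < n < \omega$) not being $\MEb$, take the finite partial monomorphism $f$ defined by $(0, 0) \mapsto (0, 0)$ and $(1, 0) \mapsto (0, 1)$; this is injective and vacuously $<$-preserving since its two domain elements are incomparable. Suppose a nonstrict epimorphism $g$ of $B_n$ extended $f$. Because $\leq$ in $B_n$ is chain-internal, $g$ would induce a map $\sigma$ on chain indices with $g(C_i) \subseteq C_{\sigma(i)}$, and from $f$ we read $\sigma(0) = 0 = \sigma(1)$. But surjectivity of $g$ forces $\sigma \colon \{0, \ldots, n-1\} \to \{0, \ldots, n-1\}$ to be onto, hence bijective on this finite set, contradicting $\sigma(0) = \sigma(1)$. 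The main technical point to verify carefully is that the fresh-chain trick really discharges all preservation obligations uniformly in the three positive cases; this ultimately reduces to the chain-internal nature of $\leq$ in $B_\omega$, making the negative result for finite $B_n$ an unavoidable consequence of having only finitely many chains to serve as images.
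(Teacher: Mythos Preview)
Your proposal is correct and follows essentially the same approach as the paper: a back-and-forth argument for the positive claims (the paper merely asserts ``by back-and-forth, it is easy to see that we can extend to include any point in the domain, and range''), and for the negative claim the same counterexample idea of mapping two incomparable points into a single chain and observing that a surjective extension would require more chains in the image than are available. Your write-up simply fills in the details that the paper leaves to the reader, including the explicit chain-index map $\sigma$ in the negative case.
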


\begin{proof}
Any kind of finite partial endomorphism of $B_\omega$ can be extended to a surjective one of the same kind, and similarly for any finite partial 
endomorphism of $(\Q,<)$ (in both cases, by back-and-forth, it is easy to see that we can extend to include any point in the domain, and range). 

However, now suppose that $P$ is a nontrivial finite union of copies of $(\Q,<)$. Then there are $a,b,c,d$ with $a \parallel b$ and $c<d$. Consider 
the partial monomorphism $f: a \mapsto c, ~b \mapsto d$. Then $f$ cannot be extended to a surjective map, since $P$ is a finite union of chains. So 
$P$ is not $\MEb$.
\end{proof}

So $(\Q,<)$ is in $\HA$ and $\HbEb$ (and all superclasses), but not in $\HbH$ (or subclasses); the family of nontrivial finite 
antichains of chains $\{B_n:= n \cdot (\Q, <): n \in \N, ~n > 1 \}$ is in $\IA, \MM, \HH, \HbHb$ (and superclasses), but not 
in $\MEb, \HM, \HbH$ (or their subclasses); while $B_\omega$ is in $\IA, \MB, \HE, \HbEb$ (and all superclasses), but not in 
$\MA, \HM, \HbH$ (or their subclasses).

\subsection{Trees}

A \emph{tree} (or \emph{semilinear order}) is a connected partial order $T$ such that for all $x \in T$, the substructure 
$T_{<x}:= \{ z \in T : z < x\}$ is a chain. By results in \cite{Cameron1}, any tree is $\IHb$ (in fact $\HbHb$); and a tree $T$ is $\IH$ if and only 
if it is dense, without endpoints, and for all finite $A \subseteq T$, $T_{<A} := \{ x \in T: x < A \}$ does not have a maximum point (and then $T$ is 
in fact $\HH$ and $\MM$). But by results in \cite{Schmerl, Lockett1}, no nontrivial trees (that is, ones with incomparable points) are $\IA, \HM, \HbH$.

We now show that a nontrivial tree $T$ is $\IEb$ (in fact $\HbEb$) if and only if for each $x \in T$ there is $y \in T$ with $x \parallel y$. Observe 
that for trees, this property is equivalent to saying that for all finite $A \subseteq T$, there is $y \in T$ with $A \parallel y$ (that is, for each 
$a \in A$, $a \parallel y$): since $T$ is a connected tree, for any finite $A \subseteq T$ there is a lower bound $x \in T$ with $x \le A$, so if 
$y \parallel x$, then $y \parallel A$.

\begin{lemma}
A countable nontrivial tree $T$ is $\IEb$ if and only if it is $\HbEb$, if and only if for each $x \in T$ there is $y \in T$ with $x \parallel y$.
Furthermore, $T$ is $\IE$ if and only if it is $\HE$ and $\MB$, if and only if it is $\IH$ and for each $x \in T$ there is $y \in T$ with 
$x \parallel y$.
\end{lemma}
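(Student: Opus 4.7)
The plan is to dispatch the easy directions of both equivalences via the hierarchy and existing lemmas, then prove the hard direction with a single back-and-forth construction, whose only novelty is a range-extension step made possible by a combinatorial lemma on trees.

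For the easy directions: in the first equivalence, $\HbEb \subseteq \IEb$ is immediate from the hierarchy, and $\IEb$ forces the incomparability property by Lemma~\ref{xincpt} (applicable since the nontrivial tree $T$ is not a chain). In the second equivalence, $\HE \subseteq \IE$ and $\MB \subseteq \IE$ both factor through $\ME$, $\IE \subseteq \IH$ is direct, and $\IE \subseteq \IEb$ because every $E$-epimorphism is in particular an $\bar{E}$-epimorphism; so $\IE$ again yields the incomparability property by Lemma~\ref{xincpt}.

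The key auxiliary step is the following tree-theoretic lemma: in a tree $T$ with the incomparability property, every finite $A \subseteq T$ has infinitely many points incomparable to it. Take a common lower bound $x$ of $A$ (which exists by connectedness of $T$). If $y \parallel x$, then $y \parallel a$ for every $a \in A$: $y \le a$ would place $y$ and $x$ together in the chain $T_{\le a}$, contradicting $y \parallel x$, and $y \ge a \ge x$ likewise contradicts $y \parallel x$. Applying the hypothesis produces one such $y$, and iterating with $A \cup \{y\}$ in place of $A$ yields infinitely many.

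With the lemma in hand, the reverse directions reduce to a back-and-forth. Enumerate $T$ and, starting from the given finite partial map, alternate between adding a new point to the domain and to the range of the current approximation $p_n$. Domain extensions use the corresponding full-homomorphism homogeneity: $T$ is $\HbHb$ by Theorem~\ref{IHbpclass} for the first equivalence, and both $\HH$ and $\MM$ by Theorem~\ref{IHpclass} for the second. Range extensions use the tree lemma: to include a target $b$ not yet in the range, find $a$ outside the current finite domain and incomparable with all of it, then adjoin the pair $(a,b)$. Because $a$ is incomparable with every existing domain element, no order relation between $a$ and the domain can be violated, so the augmented map remains a partial $\bar{H}$-homomorphism, $H$-homomorphism, or monomorphism as required, and injectivity (when demanded) is preserved since $b$ lies outside the range. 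The union is a total surjective map of the appropriate kind: an $\bar{E}$-epimorphism in the first case, and simultaneously an $E$-epimorphism and a bimorphism in the second. The main obstacle is really just the tree-theoretic lemma on incomparable points, where the tree structure is used essentially; once it is in place, the back-and-forth is routine bookkeeping.
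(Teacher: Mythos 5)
Your proposal is correct and follows essentially the same route as the paper: the easy inclusions via the hierarchy and Lemma~\ref{xincpt}, the upgrade from pointwise incomparability to ``some $z$ incomparable to any finite $A$'' via a common lower bound (which the paper records as the observation preceding the lemma), and a back-and-forth whose forth steps use $\HbHb$, respectively $\HH$ and $\MM$, and whose back steps adjoin $(a,b)$ with $a$ incomparable to the current domain. The only differences (proving the incomparability upgrade with an ``infinitely many'' strengthening, and factoring $\HE,\MB\subseteq\IE$ through $\ME$) are cosmetic.
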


\begin{proof}
Firstly, it is obvious that if $T$ is $\HbEb$, then $T$ is $\IEb$. Next if $T$ is $\IEb$, then by Lemma~\ref{xincpt}, the condition holds.

So for the first part, it remains to show that if a nontrivial $T$ satisfies the condition, then $T$ is $\HbEb$. We show by back-and-forth that any 
initial homomorphism between finite substructures of $T$ (that is, a finite partial endomorphism) can be extended to an epimorphism of $T$ to $T$. The 
`forth' direction is clear---since $T$ is $\HbHb$ (by Theorem~\ref{IHbpclass}), a  finite partial endomorphism can be extended to include any new 
point in the domain. It remains to show the `back' direction; that is, that any new point can be added to the range. So suppose $f: A \to B$ is a  
finite partial epimorphism, and $y \in T \setminus B$. By the condition, there is $x \in T$ with $A \parallel x$ (so note that in particular $x \notin A$), and so by defining $f(x) = y$, we can extend $f$ to a homomorphism which includes $y$ in the range. Thus since $T$ is countable, $T$ is $\HbEb$.

Similarly for the second part, it is obvious that if $T$ is $\HE$ or $\MB$, then $T$ is $\IE$. Now, if $T$ is $\IE$ then it is $\IEb$, so again by Lemma~\ref{xincpt}, the condition holds. 

Finally we show that if a nontrivial $T$ is $\IH$ and satisfies the condition, then $T$ is $\HE$ and $\MB$. As before, we show by back-and-forth that any initial homomorphism between finite substructures of $T$ can be extended to an epimorphism of $T$ to $T$; and if the initial map is injective (a monomorphism) then we can ensure that the extension map is too (a bimorphism). Again, the `forth' direction is clear---since $T$ is $\HH$ (by Theorem~\ref{IHpclass}), a  finite partial endomorphism can be extended to include any new point $x \in T$ in the domain; and similarly, since $T$ is $\MM$ (by Theorem~\ref{IHpclass}), we can ensure that in the extension $x$ is mapped to a point not already in the range. It remains to show the `back' direction. So suppose $f: A \to B$ is our partial epimorphism, and $y \in T \setminus B$. By the condition, there is $x \in T$ with $A \parallel x$ (so note that in particular $x \notin A$), and so by defining $f(x) = y$, we can extend $f$ to a homomorphism which includes $y$ in the range; and if $f$ was injective, then so is the extension. Thus since $T$ is countable, $T$ is $\HE$ and $\MB$.
\end{proof}

Note that in particular, this means that an $\IEb$ tree does not have a minimum `branching point', so it must have infinite branching (but not necessarily dense branching), and contain infinite antichains. 

The family of all trees is in $\HbHb$, but not in $\IEb$ or $\IH$.
The family of all trees such that for all finite $A \subseteq T$, there is $y \in T$ with $A \parallel y$ is in $\HbEb$, but not in $\IH$.
The family of all trees that are dense, without endpoints, such that for all finite $A \subseteq T$, $T_{<A} := \{ x \in T: x < A \}$ does not have a maximum point, is in $\MM, \HH, \HbHb$, but not in $\IEb, \HM, \HbH$.
The family of all trees that are dense, without endpoints, such that for all finite $A \subseteq T$, $T_{<A} := \{ x \in T: x < A \}$ does not have a maximum point, and there is $y \in T$ with $A \parallel y$ is in $\MB, \HE, \HbEb$, but not in $\IA, \HM, \HbH$.

Clearly, for $\mathrm{X,Y} \in \{ \mathrm{\overline{H},H,M,I,\overline{E},E,B,A} \}$ an inverted tree is $\mathrm{XY}$ if and only if it is the inversion of an $\mathrm{XY}$ tree; so the classification of such posets is exactly the same as for trees, but inverted.

\subsection{Connected posets which are not trees or inverted trees}

Let V, $\Lambda, Z$ be the 3-element posets pictured (using Hasse diagrams) in Figure~\ref{3elposets}.

\begin{figure}[h!tb]		
\hspace{5cm}
\begin{xy}
(0,15)*={\bullet}="a" ,
(10,15)*={\bullet}="b" ,
(5,5)*={\bullet}="c" ,
"a";"c" **@{-} ,
"b";"c" **@{-} ,
(5,0)*={{\rm V}},
(20,5)*={\bullet}="d" ,
(30,5)*={\bullet}="e" ,
(25,15)*={\bullet}="f" ,
"d";"f" **@{-} ,
"e";"f" **@{-} ,
(25,0)*={\Lambda},
(40,5)*={\bullet}="g" ,
(40,15)*={\bullet}="h" ,
(46,10)*={\bullet}="i" ,
"g";"h" **@{-} ,
(43,0)*={Z},
\end{xy}
\caption{The 3-element posets V, $\Lambda, Z$.} \label{3elposets}
\end{figure}
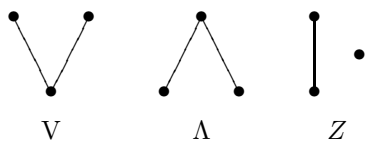

In this section we consider connected posets which are not trees or inverted trees, and these are precisely the connected posets which embed both V and 
$\Lambda$. By results in \cite{Lockett1}, there are no such posets which are $\MA, \HM, \HbH$. Meanwhile, if such a poset $P$ is $\IHb$, then it is in 
family (3) or (4) of Theorem~\ref{IHbpclass} (and so actually $\HbHb$); and if $P$ is $\IH$, then it is in family (4) or (5) of Theorem~\ref{IHpclass} 
(and so actually $\HH$ and $\MM$).

Note that unlike for trees, Lemma~\ref{xincpt} does not imply that every $\IEb$ poset $P$ satisfies the property that for all finite $A \subseteq P$ 
there is $z \in P$ with $z \parallel A$. We may observe that there are connected $\IEb$ posets with a (finite) bound on the size of antichains. We defined the family of `chains of antichains' $C_n$ for $n \in \N^* = \{1, 2, 3, \ldots\} \cup \{\aleph_0\}$ above, which are $\IA$ posets. Antichains in $C_n$ have maximum size $n$ (in fact all smaller antichains are contained in an antichain of maximum size), so for finite $n$, these are connected $\IEb$ posets with bounded antichain size. In fact, we shall later see that these are the only nontrivial connected $\IEb$ posets with bounded antichain size. 
We shall show that these are precisely the $\IEb$ posets which embed V, $\Lambda$ but not $Z$, and  afterwards consider the $\IEb$ posets which embed V, $\Lambda, Z$.

First a result about antichains in $\IEb$ posets.

\begin{lemma} \label{antichain}
If $P$ is a countable $\IEb$ poset with antichains of maximum size $n \in \N^*$, then each $x \in P$ is in an antichain of size $n$.
\end{lemma}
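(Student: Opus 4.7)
The plan is to fix an arbitrary $x \in P$ together with an antichain $A \subseteq P$ of the maximum size $n$, indexed as $A = \{a_1, a_2, \ldots\}$ (a list of length $n$, finite or countably infinite). The argument will produce an antichain of size $n$ through $x$ uniformly, so I do not need to separate out the case $x \in A$.

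The key idea is to use a one-point partial embedding to anchor $x$ to some chosen element of $A$. Specifically, I consider the map $f$ with domain $\{x\}$ sending $x$ to $a_1$. Since the domain consists of a single point, $f$ is automatically a finite partial embedding of $P$ into $P$. Because $P$ is $\IEb$, the map $f$ extends to a surjective, nonstrict-order-preserving endomorphism $g : P \to P$, so in particular $g(x) = a_1$.

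Using surjectivity of $g$, for each $i \geq 2$ I pick some $b_i \in P$ with $g(b_i) = a_i$, and set $b_1 = x$. I then claim that $B := \{b_i\}$ is an antichain of size $n$ in $P$ containing $x$. Distinctness of the $b_i$ is immediate from distinctness of the $a_i$ together with $g(b_i) = a_i$. And if $b_i \leq b_j$ for some $i \neq j$, then by nonstrict-order preservation $a_i = g(b_i) \leq g(b_j) = a_j$, which forces $a_i < a_j$ in the strict sense (since the $a_i$ are distinct), contradicting that $A$ is an antichain. Hence the $b_i$ are pairwise incomparable, and $B$ is the desired antichain.

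I do not foresee a real obstacle here; the whole proof rests on the single observation that although a nonstrict epimorphism may collapse points, it cannot create comparabilities where the images were incomparable, so the preimage of any antichain contains an antichain of the same size. Dictating $g(x) = a_1$ simply arranges for $x$ to be among the chosen representatives of the preimages of elements of $A$, and hence to lie in an antichain of maximum size.
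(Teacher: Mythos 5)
Your proof is correct and is essentially the argument the paper gives (stated there very tersely by contradiction): extend the one-point embedding $x \mapsto a_1$ to a nonstrict epimorphism and take preimages of the remaining elements of $A$, noting that order-preservation forces these preimages together with $x$ to form an antichain of size $n$. You have simply written out in full the details the paper leaves implicit.
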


\begin{proof}
Otherwise suppose $A$ is an antichain of size $n$, but a maximal antichain containing $x$ has size $m<n$. Then for $a \in A$, the map $x \mapsto a$ does not extend to a surjective map.
\end{proof}

Now we consider the $\IEb$ posets which embed V, $\Lambda$ but not $Z$.

\begin{lemma}
Let $P$ be a countable $\IEb$ poset which embeds {\rm V}, $\Lambda$ but does not embed $Z$. Then $P$ is $C_n$ for some $n \in \N^*$.
\end{lemma}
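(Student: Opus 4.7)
The plan is to introduce the equivalence relation $x \sim y$ if and only if $x = y$ or $x \parallel y$, show that its classes are antichains arranged as a $\Q$-chain, and then identify all classes as having the same size $n$ by a direct $\IEb$ argument. Reflexivity and symmetry of $\sim$ are immediate; for transitivity, suppose $x \parallel y$ and $y \parallel z$ but $x \neq z$ with $x, z$ comparable. Without loss of generality $x < z$, and then applying the no-$Z$ hypothesis to the chain $x < z$ with third point $y$ forces $y$ to be comparable to one of $x, z$, contradicting either $x \parallel y$ or $y \parallel z$.

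Next I show that the quotient is a chain. Each class is an antichain by definition. Given $[x] \neq [y]$, the elements $x$ and $y$ are comparable, say $x < y$. For any $x' \in [x]$ and $y' \in [y]$ a short case analysis on whether $x' = x$ and whether $y' = y$ shows that $x' < y'$: in each case the reverse assumption $y' < x'$ combines with the established chain $x < y$ and with the parallelism inside $[x]$ or $[y]$ to produce either a direct contradiction (e.g.\ $y' < x < y$ with $y' \parallel y$) or a forbidden $Z$-configuration via no-$Z$ applied to $y' < x'$ with third point $x$. Hence $[x] < [y]$ setwise, so $P/{\sim}$ is linearly ordered. Since $P$ embeds V it is not an antichain, so by Lemma~\ref{densewoendpt} $P$ is dense and without endpoints; these properties transfer to $P/{\sim}$ because a witness $z$ strictly between $x$ and $y$ must sit in a class strictly between $[x]$ and $[y]$. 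Thus $P/{\sim}$ is a countable dense linear order without endpoints, hence isomorphic to $(\Q, <)$ by Cantor's theorem.

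It remains to show that all classes have the same size $n \in \N^*$. Suppose some class $[x]$ is finite of size $m$, with $[x] = \{x_1, \ldots, x_m\}$. If another class $[y]$ contained $m+1$ distinct elements $y_1, \ldots, y_{m+1}$, then the finite partial embedding $x_i \mapsto y_i$ (for $i \le m$) would extend by $\IEb$ to an epimorphism $\phi$; pick $z$ with $\phi(z) = y_{m+1}$. For each $i$ we have $\phi(z) = y_{m+1} \parallel y_i = \phi(x_i)$, so since $\phi$ preserves strict order $z$ must be parallel to each $x_i$ (and distinct from each, as the $\phi$-images differ), forcing $z \in [x] \setminus \{x_1, \ldots, x_m\}$, a contradiction. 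A symmetric argument using the finite partial embedding $y_j \mapsto x_j$ rules out $|[y]| < m$. So if any class is finite of size $m$, every class has size $m$. Otherwise every class is infinite, hence of size $\aleph_0$ since $P$ is countable. Either way all classes share a common size $n \in \N^*$.

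Assembling the pieces, $P$ decomposes as a $\Q$-chain of antichains of constant size $n$, with strict order going uniformly upwards between distinct classes, which is precisely $C_n$. The main obstacle is the constant-size step: Lemma~\ref{antichain} directly handles the case when $P$ has a well-defined maximum antichain size, but pinning down that every class has the same cardinality (in particular, ruling out the possibility of classes of unbounded finite size) is cleanest via the direct $\IEb$ extension argument sketched above.
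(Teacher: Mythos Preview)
Your proof is correct and follows the same strategy as the paper's: recognize that excluding $Z$ makes incomparability an equivalence relation (so $P$ is a weak order), invoke Lemma~\ref{densewoendpt} to get a dense linear quotient without endpoints, and then argue that all classes have the same size. The only real difference is in this last step: the paper simply cites Lemma~\ref{antichain}, whereas you give a direct $\IEb$ extension argument; your version has the merit of explicitly handling the possibility of classes of unbounded finite size, which Lemma~\ref{antichain} as stated does not directly cover.
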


\begin{proof}
If $Z$ does not embed in $P$, then $P$ is a weak order (that is, incomparability is an equivalence relation). By Lemma~\ref{antichain}, all maximal antichains of $P$ have the same size, say $n \in \N^*$ (that is, all the equivalence classes have this same size). Now by Lemma~\ref{densewoendpt}, $P$ is dense and without endpoints, and so $P$ is $C_n$.
\end{proof}

However, we may see that for finite $n$, $C_n$ is not $\MEb$; in fact we show a more general result of which this is an obvious consequence. 

\begin{lemma}
If $P$ is a countable $\IEb$ poset which is not a chain or an antichain all of whose antichains are finite, then $P$ is not $\MEb$.
\end{lemma}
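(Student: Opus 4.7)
The plan is to construct, for every $P$ satisfying the hypotheses, a finite partial monomorphism $f$ that cannot be extended to an epimorphism. By Lemmas~\ref{densewoendpt} and~\ref{antichain}, $P$ is dense without endpoints and every element lies in a maximum antichain of (finite) size $n \geq 2$. Fix such a maximum antichain $A = \{a_1, \ldots, a_n\}$.

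I would treat two cases based on the structure of $P$. If $P$ is disconnected, then since all antichains are finite $P$ splits into finitely many connected components, and $A$ contains at least one representative from each component. Define $f$ by mapping all of $A$ into a single component $Q$, which is possible because $Q$ is dense without endpoints and so contains chains of length $n$. Any endomorphism $g$ preserving $<$ must send each connected component of $P$ into a single component; since $g(a_i) \in Q$ for every $i$, each component of $P$ is forced into $Q$, so $g(P) \subseteq Q$ misses every other component and $g$ fails to be surjective.

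If $P$ is connected, I would use a density-style obstruction. Pick $c \in P$ with $c > a_1$ (exists by absence of endpoints), and let $C = \{c_1 = c, c_2, \ldots, c_n\}$ be a maximum antichain containing $c$. Define $f(a_1) = a_1$ and $f(a_i) = c_i$ for $i \geq 2$; this is a monomorphism because $A$ is an antichain (no $<$-relations to preserve) and the images are pairwise distinct. I claim that $y = c$ is not in the image of any extension $g$. For if $g(x) = c$, preservation of $<$ together with $g(a_1) = a_1$ and $g(a_i) = c_i$ forces $x > a_1$ or $x \parallel a_1$ (from $c > a_1$) and $x \parallel a_i$ for each $i \geq 2$ (from $c \parallel c_i$). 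Thus $\{x, a_2, \ldots, a_n\}$ is an antichain. If $x \in A$, then $x \parallel a_k$ for $k \geq 2$ forces $x = a_1$, contradicting $g(a_1) = a_1 \neq c$. If $x \notin A$, then $x \parallel a_1$ would yield an antichain of size $n+1$, impossible, so $x > a_1$; this then needs to be excluded using the connectedness of $P$ together with finiteness of antichains.

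The hardest step is the last one: ruling out the configuration $x > a_1$ with $x \parallel a_i$ for all $i \geq 2$ in the connected case. This is where the combination of connectedness, the $\IEb$ hypothesis, and finiteness of antichains must be used: these together force the ``layered'' rigidity characteristic of the $C_n$-examples, under which any element above $a_1$ is comparable (hence above) every member of the max antichain $A$, so no such $x$ exists. A careful argument here, perhaps adjusting the choice of $c$ or the target $y$ to fit posets in class~(5) of Theorem~\ref{IHpclass} if they arise, completes the proof.
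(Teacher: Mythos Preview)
Your argument has a genuine gap in the connected case, and you flag it yourself: you reduce to excluding the configuration $x > a_1$ with $x \parallel a_i$ for all $i \geq 2$, but then do not prove this is impossible. Your proposed justification --- that connectedness plus $\IEb$ plus finite antichains forces the ``layered rigidity characteristic of the $C_n$-examples'' --- is essentially the assertion that any connected $\IEb$ poset with bounded antichains must be some $C_n$. That is a classification result in its own right, not something you have established here, and invoking it (or hedging with ``perhaps adjusting the choice of $c$'') leaves the proof unfinished. Your disconnected case, by contrast, is correct but already more elaborate than needed.

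The paper's proof is much shorter and uses no case split. Take any maximum antichain $A$ of size $n>1$ containing $a_1,a_2$, any pair $b<c$ in $P$, and (by Lemma~\ref{antichain}) a maximum antichain $B$ of size $n$ through $b$. The two-point monomorphism $a_1\mapsto b,\ a_2\mapsto c$ already fails to extend to any surjection: for any putative epimorphic extension $g$, one preimage of each element of $B$ can be chosen, and these $n$ preimages (one of which is $a_1$) form an antichain of size $n$ because their images are pairwise incomparable; but $a_2$ is incomparable to $a_1$ and cannot be among these preimages since $g(a_2)=c\notin B$. That is the whole mechanism --- sending two incomparable points to two \emph{comparable} points wastes one slot of the bounded antichain capacity --- and it replaces your entire connected-case construction with a two-element partial map.
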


\begin{proof}
Suppose $A$ is an antichain of maximum size $n \in \N, ~n>1$ containing $a_1, a_2$, and let $b<c \in P$. 
Then by Lemma~\ref{antichain}, $b$ is also in an antichain $B$ of size $n$. 
But then the monomorphism $a_1 \mapsto b, ~a_2 \mapsto c$ does not extend to a surjective map to $B$.
\end{proof}

Now we consider the $\IEb$ posets which embed {\rm V}, $\Lambda, Z$. 
In this case, we find that such a poset $P$ satisfies the property that for any finite $A \subseteq P$, there is $z \in P$ with $z \parallel A$. This property can then be used to show that all such $\IHb$ posets are $\HbEb$, and all such $\IH$ posets are $\HE$ and $\MB$.

\begin{lemma} \label{Aincpt}
Let $P$ be a countable $\IEb$ poset which embeds {\rm V}, $\Lambda, Z$. Then:
\begin{enumerate}
\item[(i)] for any $x,y \in P$ with $x<y$ there is $z \in P$ with $z \parallel \{x,y\}$;  
\item[(ii)] for any finite $A \subseteq P$, there is $z \in P$ with $z \parallel A$.
\end{enumerate}
\end{lemma}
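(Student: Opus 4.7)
The plan is to prove (i) by a direct application of $\IEb$ to a copy of $Z$ inside $P$, and then to deduce (ii) from (i) by observing that every finite subset of $P$ has both a common lower bound and a common upper bound in $P$.

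For (i), I would pick $g, h, i \in P$ with $g < h$ and $i \parallel g, h$, which exist because $P$ embeds $Z$. The map $p : x \mapsto g, y \mapsto h$ is an isomorphism between the finite substructures $\{x, y\}$ and $\{g, h\}$ of $P$, hence a finite partial embedding. The $\IEb$ property extends $p$ to a nonstrict order-preserving epimorphism $f : P \to P$, and surjectivity yields some $z \in P$ with $f(z) = i$. I then verify $z \parallel x, y$: if $z \le x$ then nonstrict order preservation gives $i = f(z) \le f(x) = g$, contradicting $i \parallel g$, and the three symmetric cases ($z \ge x$, $z \le y$, $z \ge y$) are handled identically.

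For (ii) the key input is that a countable $\IEb$ poset $P$ embedding ${\rm V}, \Lambda, Z$ is in particular $\IHb$, is connected (the standing assumption of this subsection), is not a tree (since $\Lambda$ does not embed in any tree), and is not an inverted tree (since ${\rm V}$ does not embed in any inverted tree); hence by Theorem~\ref{IHbpclass} it lies in family (3) or (4), both of which require every finite subset to have a common upper and a common lower bound in $P$. Given finite $A \subseteq P$, I would dispose of $|A| \le 1$ via Lemma~\ref{xincpt}. For $|A| \ge 2$, pick $x, y \in P$ with $x \le A \le y$; since $A$ contains two distinct points trapped in $[x, y]$, we must have $x \ne y$, so $x < y$, and (i) supplies some $z \parallel x, y$. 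Finally, for any $a \in A$, $z \le a$ would force $z \le a \le y$ and hence $z \le y$, contradicting $z \parallel y$, and symmetrically $z \ge a$ is impossible; thus $z \parallel A$.

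The only place requiring any thought is the reduction in (ii) to common bounds, which I would handle via the $\IHb$ classification rather than trying to build bounds directly from the $\IEb$ hypothesis; once that reduction is in place, both parts are essentially forced by applying $\IEb$ to the given $Z$-substructure of $P$.
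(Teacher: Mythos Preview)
Your proof is correct and follows essentially the same approach as the paper: for (i) you map $\{x,y\}$ onto a copy of the chain in $Z$ and pull back the incomparable point via an $\overline{\mathrm E}$-extension, and for (ii) you invoke the $\IHb$ classification (families (3)/(4)) to obtain common upper and lower bounds for $A$, then apply (i). The only differences are cosmetic: you spell out the four comparability cases in (i) and the reason $P$ falls into families (3) or (4), whereas the paper leaves these implicit from the subsection's standing assumptions.
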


\begin{proof}
Let $a,b,c \in P$ with $a<b$ and $c \parallel \{a,b\}$.
\begin{enumerate}
\item[(i)] Since $P$ is $\IEb$, the isomorphism $x \mapsto a, ~ y \mapsto b$ must extend to an epimorphism $f: P \to P$, so there is a point $z$ which $f$ maps onto $c$. Since $c \parallel \{a,b\}$ and $f$ is a homomorphism, $z \parallel \{x,y\}$.

\item[(ii)] By Theorem~\ref{IHbpclass}, since $P$ is $\IHb$, all finite subsets have upper and lower bounds, so there are $x,y \in P$ with 
$x \le A \le y$. By Lemma~\ref{xincpt}, we may assume that $A$ is nontrivial, so $x < y$. By (i), there is therefore some $z \in P$ with 
$z \parallel \{x,y\}$, giving $z \parallel A$ as required. 
\end{enumerate}
\end{proof}

\begin{theorem}
Let $P$ be a countable poset which embeds {\rm V}, $\Lambda, Z$.
If $P$ is $\IHb$, then $P$ is $\HbEb$. 
If $P$ is $\IH$, then $P$ is $\HE$ and $\MB$. 
\end{theorem}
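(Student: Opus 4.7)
My plan is to imitate, essentially verbatim, the back-and-forth argument used for nontrivial trees in the earlier lemma, with Lemma~\ref{Aincpt}(ii) supplying the crucial ``fresh incomparable element'' step that previously came from the tree condition. The two halves of the conclusion will be handled by the same construction, with different bookkeeping for injectivity and for strict versus nonstrict order preservation.

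For the first statement, assume $P$ is $\IHb$; then by Theorem~\ref{IHbpclass} $P$ is in fact $\HbHb$. Given a finite partial endomorphism $f \colon A \to B$ (nonstrict-order preserving) and a fixed enumeration of $P$, I interleave ``forth'' and ``back'' stages. At a forth stage I must extend $\operatorname{dom}(f)$ to contain a prescribed point; this is immediate from $\HbHb$. At a back stage I must put a prescribed $y \in P \setminus \operatorname{range}(f)$ into the range: I apply Lemma~\ref{Aincpt}(ii) to the current finite domain $A$ to obtain some $z \in P$ with $z \parallel A$, and set $f(z) = y$. Since $z$ is incomparable to every element of $A$, no relation $\le$ between $z$ and an old domain point needs to be preserved, so the extended map is still a nonstrict-order preserving homomorphism. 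The union of these successive extensions is an epimorphism, so $P$ is $\HbEb$.

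For the second statement, assume $P$ is $\IH$; by Theorem~\ref{IHpclass} $P$ is both $\HH$ and $\MM$. I run exactly the same back-and-forth, starting from a finite partial endomorphism to show $\HE$, and from a finite partial monomorphism to show $\MB$. The forth step is provided by $\HH$ (resp.\ $\MM$, which additionally lets me place the new domain point outside the current range). At the back step I again pick $z \parallel A$ using Lemma~\ref{Aincpt}(ii) and set $f(z) = y$. Because $z$ is strictly incomparable to everything in $A$ (in particular $z \notin A$), the map remains a strict-order homomorphism, and if the previous stage was injective then so is the extension (the new domain point $z$ is fresh, and we chose $y$ fresh).

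The only point that needs a moment's thought is why the back step preserves order in both the strict and nonstrict senses: this is automatic because ``$z \parallel A$'' means there are no order comparisons at all between $z$ and any element of $A$, so no order relations need to be transferred to the image side. Consequently I do not anticipate a serious obstacle; the substance of the theorem is really concentrated in Lemma~\ref{Aincpt}, and this statement is essentially a clean deduction from it together with the two classification theorems.
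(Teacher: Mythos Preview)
Your proposal is essentially identical to the paper's own proof: both use Theorems~\ref{IHbpclass} and~\ref{IHpclass} to handle the ``forth'' direction (upgrading $\IHb$ to $\HbHb$, and $\IH$ to $\HH$ and $\MM$), and both invoke Lemma~\ref{Aincpt}(ii) to find $x \parallel A$ and set $f(x)=y$ for the ``back'' direction. The organization, the bookkeeping for injectivity in the $\MB$ case, and the observation that $x \parallel A$ makes the extension automatically order-preserving are all exactly as in the paper.
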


\begin{proof}
First suppose that $P$ is $\IHb$. 
We show by back-and-forth that any initial homomorphism between  finite substructures of $P$ (that is, any  finite partial endomorphism) can be 
extended to an epimorphism of $P$ to $P$.
The `forth' direction is clear---if $P$ is $\IHb$ it is also $\HbHb$ (by Theorem~\ref{IHbpclass}), so a  finite partial endomorphism can be extended to include any new point in the domain. It remains to show the `back' direction; that is, that any new point can be added to the range. So suppose that $f: A \to B$ is a  finite partial epimorphism, and $y \in P \setminus B$. By Lemma~\ref{Aincpt}, there is $x \in P$ with $A \parallel x$ (so note that in particular $x \notin A$), and so by defining $f(x) = y$, we can extend $f$ to a homomorphism which includes $y$ in the range. Thus since $P$ is countable, $P$ is $\HbEb$.

Now suppose that $P$ is $\IH$. As before, we show by back-and-forth that any initial homomorphism between  finite substructures of $P$ can be extended 
to an epimorphism of $P$ to $P$; and if the initial map is injective (a monomorphism) then we can ensure that the extension map is too (a bimorphism). Again, the `forth' direction is clear---since $P$ is $\HH$ (by Theorem~\ref{IHpclass}), a  finite partial endomorphism can be extended to include any new point $x \in P$ in the domain; and similarly, since $P$ is $\MM$ (by Theorem~\ref{IHpclass}), we can ensure that in the extension $x$ is mapped to a point not already in the range. The `back' direction follows exactly as before. Suppose that $f: A \to B$ is a  finite partial epimorphism, and 
$y \in P \setminus B$. By Lemma~\ref{Aincpt}, there is $x \in P$ with $A \parallel x$ (so note that in particular $x \notin A$), and so by defining $f(x) = y$, we can extend $f$ to a homomorphism which includes $y$ in the range; and if $f$ was injective, then so is the extension. Thus since $P$ is countable, $P$ is $\HE$ and $\MB$.
\end{proof}

The family of posets $\{C_n : n \in \N, ~n > 1 \}$ is in $\IA, \MM, \HH, \HbHb$, but not in $\MEb, \HM, \HbH$.
The family of all posets such that all finite subsets have upper and lower bounds, and either every $\xset$ has a midpoint or none does, is in $\HbHb$, but not in $\IEb$ or $\IH$.
The family of all posets such that for all finite $A \subseteq P$, $A$ has upper and lower bounds and there is $z \in P$ with $A \parallel z$, and either every $\xset$ has a midpoint or none does, is in $\HbEb$, but not in $\IH$.
The family of all posets such that for all finite $A \subseteq P$, $P_{(<A)}$ is nonempty and has no maximal elements, $P_{(>A)}$ is nonempty and has no minimal elements, and either every $\xset$ has a midpoint or none does, is in $\MM, \HH, \HbHb$, but not in $\IEb, \HM, \HbH$.
The family of all posets such that for all finite $A \subseteq P$, $P_{(<A)}$ is nonempty and has no maximal elements, $P_{(>A)}$ is nonempty and has no minimal elements, and there is $z \in P$ with $A \parallel z$, and either every $\xset$ has a midpoint or none does, is in $\MB, \HE, \HbEb$, but not in $\IA, \HM, \HbH$. Finally, note that $C_\omega$ is in the final family so it is $\MB, \HE, \HbEb$, but it is also $\IA$, but not $\MA, \HM, \HbH$.

\subsection{Classifications and hierarchy picture}

Since we have now determined exactly which classes all of the homomorphism-homogeneous posets lie in, we have full classifications and can determine 
the final hierarchy picture of the classes. This is shown in Figure~\ref{finalposethierpic}. Extra labels have been added to show which posets lie in 
each of the classes. Individual posets and families of posets are shown below the classes of which they are a member (in boxes joined to the classes 
by a dotted line). The individual posets $A_1, ~A_\omega, ~B_1 = (\Q, <), ~B_\omega, ~C_\omega$ are clear. We use $A_n, ~B_n, ~C_n$ to represent the 
families of such posets for $n \in \N, ~n > 1$. New notation is introduced for the remaining families of homomorphism-homogeneous posets. 
A poset $P$ satisfies the \emph{bounding property} (BP) if either every $\xset$ has a midpoint or none does (or vacuously there are no $\xset$s); and if V embeds in $P$, then all finite subsets of $P$ have lower bounds; and similarly if $\Lambda$ embeds in $P$, then all finite subsets of $P$ have upper bounds.
A poset $P$ satisfies the \emph{density property} (DP) if it does not have maximal or minimal elements; and if V embeds in $P$, then for all finite $Q \subseteq P$, $P_{(<Q)}$ is nonempty and has no maximal elements; and similarly if $\Lambda$ embeds in $P$, then for all finite $Q \subseteq P$, $P_{(>Q)}$ is nonempty and has no minimal elements. 
A poset $P$ satisfies the \emph{incomparable point property} (IPP) if for any finite $A \subseteq P$, there is $z \in P$ with $z \parallel A$. 
Let $\mathcal{P}_{b}$ be the family of all posets which satisfy (BP) (note that this family contains all trees and all inverted trees, and the families (3), (4) of Theorem~\ref{IHbpclass}). 
Let $\mathcal{P}_{bd}$ be the family of all posets which satisfy (BP) and (DP) (so this family contains the families (3), (4), (5) of Theorem~\ref{IHpclass}). 
Let $\mathcal{P}_{bi}$ be the family of all posets which satisfy (BP) and (IPP); and let $\mathcal{P}_{bdi}$ be the family of all posets which satisfy (BP), (DP) and (IPP).

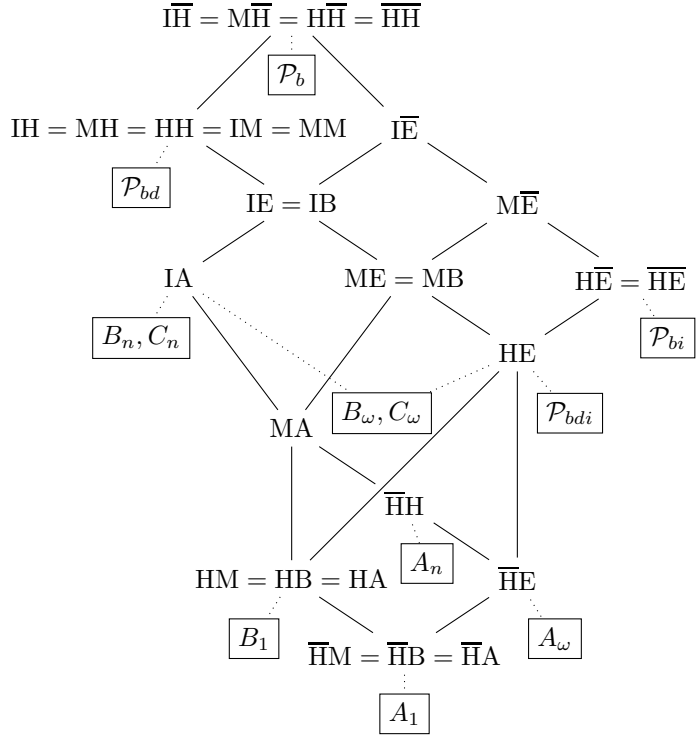
\begin{figure}[h!tb]				
\hspace{2cm}
\begin{tikzpicture}
\draw 	(3.5,10.5) node (IHb) {$\IHb = \MHb = \HHb = \HbHb$}
	(3.5,9.7) node[draw] (Pb) {$\mathcal{P}_{b}$}
	(2,9) node (IH) {$\IH = \MH = \HH = \IM = \MM$}
	(1.5,8.2) node[draw] (Pbd) {$\mathcal{P}_{bd}$}
	(5,9) node (IEb) {$\IEb$}
	(3.5,8) node (IE) {$\IE = \IB$}
	(6.5,8) node (MEb) {$\MEb$}
	(2,7) node (IA) {$\IA$}
	(1.5,6.2) node[draw] (Bn) {$B_n, C_n$}
	(5,7) node (ME) {$\ME = \MB$}
	(8,7) node (HEb) {$\HEb = \HbEb$}
	(8.5,6.2) node[draw] (Pbi) {$\mathcal{P}_{bi}$}
	(6.5,6) node (HE) {$\HE$}
	(7.2,5.2) node[draw] (Pbdi) {$\mathcal{P}_{bdi}$}
	(4.7,5.2) node[draw] (Bw) {$B_\omega, C_\omega$}
	(3.5,5) node (MA) {$\MA$}
	(5,4) node (HbH) {$\HbH$}
	(5.3,3.2) node[draw] (An) {$A_n$}
	(3.5,3) node (HM) {$\HM = \HB = \HA$}
	(3,2.2) node[draw] (Q) {$B_1$}
	(6.5,3) node (HbE) {$\HbE$}
	(7,2.2) node[draw] (Aw) {$A_\omega$}
	(5,2) node (HbM) {$\HbM = \HbB = \HbA$}
	(5,1.2) node[draw] (A1) {$A_1$};
\draw 	(IHb) -- (IH)
	(IHb) -- (IEb)
	(IH) -- (IE)
	(IEb) -- (IE)
	(IEb) -- (MEb)
	(IE) -- (IA)
	(IE) -- (ME)
	(MEb) -- (ME)
	(MEb) -- (HEb)
	(IA) -- (MA)
	(ME) -- (MA)
	(ME) -- (HE)
	(HEb) -- (HE)
	(MA) -- (HbH)
	(MA) -- (HM)
	(HE) -- (HbE)
	(HE) -- (HM)
	(HbH) -- (HbE)
	(HM) -- (HbM)
	(HbE) -- (HbM);
\draw[dotted] 	(IHb) -- (Pb)
	(IH) -- (Pbd)
	(HEb) -- (Pbi)
	(IA) -- (Bn)
	(IA) -- (Bw)
	(HE) -- (Bw)
	(HE) -- (Pbdi)
	(HbH) -- (An)
	(HM) -- (Q)
	(HbE) -- (Aw)
	(HbM) -- (A1);
\end{tikzpicture}
\caption{Final hierarchy picture of the homomorphism-homogeneity classes for countable posets.} \label{finalposethierpic}
\end{figure}

\section{Linear orders}

In this section we give some more information about endomorphisms of linear orders. We already described generic endomorphisms of $\mathbb Q$ up to 
conjugacy in \cite{Lockett2}, but these ideas can be used to characterize conjugacy classes in arbitrary linear orders, generalizing 
Holland's method for automorphisms (see \cite{Glass} Theorem 2.2.5). Briefly, Holland's method is as follows. If $g$ is an automorphism of a linear 
order, then the convex closures of orbits are called `orbitals' (also sometimes called `intervals of support'), and these come in three possible 
kinds, depending whether $g$ is increasing, decreasing, or constant on the orbital (in the last case, the orbital must be a singleton), and we assign 
parities $+1$, $-1$, and 0 is these cases respectively. The family of orbitals then becomes a $\{\pm 1, 0\}$-coloured chain, called the 
{\em orbital pattern} of $g$, and the main result about these is that two automorphisms of a doubly homogeneous chain are conjugate if and only if 
they have isomorphic orbital patterns.

If we consider endomorphisms instead, then it still makes sense to talk about their being conjugate, but since inversion is involved, the conjugacy 
must be taken as an automorphism, even though the endomorphisms considered need not be. This for instance is the way to describe the iterative 
behaviour of such maps. In \cite{Lockett2}, the generic endomorphisms of the ordered rationals were described in terms of their conjugacy class, and 
we wish here to expand on those remarks, and apply them to other linear orders. As usual we should distinguish between maps which preserve $<$, or 
just $\le$---it was the latter which was treated in \cite{Lockett2}. These are however closely related, since preserving $<$ rather than $\le$ amounts 
to restricting to just two out of the four possible monoids, so it actually suffices to consider just $\le$, and to read off the corresponding results 
for $<$ by looking at those for $\le$ in just Emb and Aut, and not Hom and Epi.

Let us therefore consider a linearly ordered set $(X, \le)$ under the reflexive relation, and let $f$ be an endomorphism. The analogue of an 
{\em orbital} of $x \in X$ as described above should be taken to be the convex closure of $\bigcup_{n \in {\mathbb Z}}f^nx$ (where as usual, by $f^nx$ 
where $n = -m$ is negative we understand $\{y: f^my = x\}$), and such a set may also be characterized as an equivalence class under the relation 
$\sim$ given by $x \sim y$ if for some $k, l, m \in {\mathbb N}$, $f^k(x) \le f^l(y) \le f^m(x)$. This relation is clearly reflexive. To see that it 
is symmetric, note that if $f^k(x) \le f^l(y) \le f^m(x)$ then $f^{k+l}(y) \le f^{k+m}(x) \le f^{l+m}(y)$, and it is transitive, since if 
$f^{k_1}(x) \le f^{l_1}(y) \le f^{m_1}(x)$ and $f^{k_2}(y) \le f^{l_2}(z) \le f^{m_2}(y)$ then 
$f^{k_1+k_2}(x) \le f^{l_1+k_2}(y) \le f^{l_1+l_2}(z) \le f^{l_1+m_2}(y) \le f^{m_1+m_2}(x)$. It is immediate that its equivalence classes are convex. 
To see that they are the convex closures of sets of the form $\bigcup_{n \in {\mathbb Z}}f^nx$ note that the convex closure of 
$\bigcup_{n \in {\mathbb Z}}f^nx$ consists of all elements $y$ such that $x \sim y$, in other words this is the $\sim$-class of $x$.

The main difference here from the situation for automorphisms is that there are infinitely many different behaviours that can arise on an orbital in 
this sense, whereas for automorphisms, there are only three. The corresponding result still holds however, which says that two endomorphisms are 
conjugate if and only if their families of orbitals are order-isomorphic under an isomorphism which carries any orbital to an isomorphic 
orbital, where the isomorphism must respect the action of the map. In the case of automorphisms, the usual formulation is that double homogeneity is 
assumed (any 2-element set can be mapped to any other by an automorphism) but this is needed to build an isomorphism between orbitals of the same 
parity, so if we just postulate the isomorphism, then this is not needed. For generic epimorphisms, there is only one possibility for this isomorphism 
type, at any rate in End($\mathbb Q$), which is why it was relatively easy in \cite{Lockett2} to describe such elements explicitly.

We can give a clearer visualization of what orbitals look like as follows. Consider $f \in {\rm End}(X, \le)$ and $x \in X$, let $Y$ be its orbital. 
We first suppose that $Y$ contains no fixed point of $f$, and without loss of generality, that $x < fx$. Then $x \le fx \le f^2x \le f^3x \le \ldots$,
and in view of our assumption, these inequalities must all be strict. Let us define subsets $I_n$ of $X$ for $n \ge 0$ by $I_n = f^{-n}f^nx$. Thus 
$x \in I_n$. We first notice that $I_n \subseteq I_{n+1}$, for if $y \in I_n$ then $f^ny = f^nx$, so $f^{n+1}y = f^{n+1}x$ giving $y \in I_{n+1}$. 
Next, $I_n$ is convex, since if $y_1 \le z \le y_2$ where $y_1, y_2 \in I_n$ then $f^nx = f^ny_n \le f^nz \le f^ny_2 = f^nx$, so also $f^nz = f^nx$ 
and $z \in I_n$. Next we note that for each $n \ge 0$ and $m \in {\mathbb Z}$, $f^mI_n < f^{m+1}I_n$. To see this, let $y \in f^mI_n$ and 
$z \in f^{m+1}I_n$, and suppose for a contradiction that $z \le y$. First treating the case $m \ge 0$, $y = f^my'$ and $z = f^{m+1}z'$ where 
$y', z' \in I_n$. Hence $f^{m+1}z' \le f^my'$, so $f^{m+n+1}x = f^{m+n+1}z' \le f^{m+n}y' = f^{m+n}x$, which is a contradiction. Next, if $m = -l < 0$, 
then $f^ly, f^{l-1}z \in I_n$, so $f^nx = f^{n+l-1}z \le f^{n+1-l}y = f^{n-1}x$, again a contradiction.

Now we let $I^* = \bigcup_{n \ge 0}I_n$, and it follows that $\ldots < f^{-2}I^* < f^{-1}I^* < I^* < fI^* < f^2I^* < \ldots$. The orbit of $x$ (under 
taking arbitrary forwards and backwards images of $x$ under $f$ is then $\bigcup_{m \in {\mathbb Z}}f^mI^*$, and the orbital is the convex closure of 
this, which is clearly equal to $\bigcup_{m \in {\mathbb Z}}f^mJ^*$, where $J^* = I^* \cup \{y \in X: I^* < y < fI^*\} = (\inf I^*, \inf fI^*)$ or 
$[\inf I^*, \inf fI^*)$ if $\inf I^* \in I^*$. It should be pointed out however that there is a multitude of possible behaviours here, since we are 
not (yet) assuming that $f$ is an epimorphism, so the inverse images may be empty, or there may be a mixture, some empty and some not, so our 
description is insufficient to pin down the structure of the orbitals in any really satisfactory way.

Now let us consider the case in which the orbital contains a fixed point, and we assume this is $x$. Then for $n \ge 0$, $f^{-n}x$ is convex, and 
$\{x\} = f^0x \subseteq f^{-1}x \subseteq f^{-2}x \subseteq f^{-3}x \subseteq \ldots$ and the orbital is $\bigcup_{n \ge 0}f^{-n}x$. This is the case 
which applies when $f$ is a generic endomorphism of $\mathbb Q$ (see \cite{Lockett2}), and then there is just one pattern of behaviour up to 
isomorphism. Each $f^{-(n+1)}x$ extends $f^{-n}x$ above and below, and each point has inverse image isomorphic to $\mathbb Q$. In the general case 
there are many more options, even if $X = {\mathbb Q}$. For instance $x$ may be the greatest point of the orbital, or the least. As another example, 
we can easily exhibit $2^{\aleph_0}$ behaviours for $X = \omega$, by choosing any strictly increasing sequence $n_0 < n_1 < n_2 < \ldots$ in $\omega$, 
and letting $f(m) = 0$ if $m \le n_0$, and $f(m) = n_k$ if $n_k < m \le n_{k+1}$, in which case $f^{-k}0 = [0, n_k]$ for each $k$.

We can specialize these to the cases of embeddings or epimorphisms (the former is equivalent to considering the structure $(X, <)$ in place of 
$(X, \le)$). For embeddings, all $f^nx$ are of size at most 1. For epimorphisms, all $f^nx$ are non-empty. (For $n \ge 0$ these are both true in all 
cases by definition of `function'; it is negative values which are significant here.)

\section{Generic endomorphisms of trees}

In this final section, we expand on the information given in \cite{Lockett2} about generic endomorphisms of trees. We recall that an automorphism is 
said to be {\em generic} if it lies in a comeagre conjugacy class. If we continue to insist (as we must) that the conjugating element is an 
automorphism, then we can also state what it would mean for the five other kinds of maps to be `generic', using precisely the same definition. It was 
shown however in \cite{Lockett2} that there are only three cases which can possibly arise, as (under reasonable conditions on the structure) any 
generic map must be surjective, so there are just three possibilities, automorphisms, bimorphisms, and epimorphisms. The intuition which we mainly 
follow is that a map should be regarded as generic provided it fulfils all possible properties that can be required of it on the basis of a finite map 
of the relevant kind. Thus for instance, it will be surjective, since no finite map can exclude any given element from being added to the range in 
some extension.

The general existence results given in \cite{Lockett2} for generic endomorphisms required that the structure itself be homogeneous, meaning in the 
classical sense, that is, $\IA$-homogeneous, but there are no such trees at all. The solution adopted in \cite{Lockett2} was to work in an expanded 
language where the trees are viewed as semilattices. The types we are principally interested in are the ones described by Droste in \cite{Droste}, which 
are $T_k^+$ and $T_k^-$ for $2 \le k \le \aleph_0$, being countable trees in which all branches are isomorphic to $\mathbb Q$, and with all 
ramification points of ramification order $k$; those of `positive type' are $T_k^+$, which means that the ramification points lie in the structure, 
and those of `negative type', in which none of the ramification points lie in the structure (so they form a dense set of cuts). Note also that since 
the trees situation is a lot more complicated than for linear orders, we only look at the strict relation $<$, though the analysis also applies to
$\le$ under suitable modification. 

Our main goal here is to establish that there are generic members of all three monoids, Aut, Bi, and Epi, which are all distinct, in the case of 
all $T_k^+$ and $T_k^-$, thereby giving an example that we were unable to come up with in \cite{Lockett2}, and we also give some characterizations of 
what these are like, extending the remarks made for Aut there. The main technical tool to do this is Theorem 2.3 from \cite{Lockett2}, and we also use 
ideas described in \cite{Kuske}. In all cases we are viewing $T = T_k^+$ or $T_k^-$ as a lower semilattice, and we consider the family $P$ of finite 
partial isomorphisms, monomorphisms, endomorphisms in the three cases respectively. The import of requiring the structure to be viewed as a lower 
semilattice is that the finite partial maps we consider should be defined on subsemilattices, which are therefore just finite trees. In the positive 
case $T_k^+$ this is all we need to say. In the negative case $T_k^-$, we should view the relevant lower subsemilattice as `coloured', one colour for 
the points of the structure, and the other for the ramification points, and the maps must preserve the colours, as well as the semilattice structure.

According to Theorem 2.3 of \cite{Lockett2}, to guarantee the existence of a generic, it suffices to verify that the natural family of finite partial 
maps $P$ has the `weak amalgamation property' (and joint embedding property), but for this, in practice, we show that $P$ has a cofinal subset with 
the amalgamation property (and another question left open in \cite{Lockett2} was whether this is actually any stronger). In \cite{Kuske} the fact that 
there is a generic automorphism of the countable universal homogeneous partial order was verified using this condition, and it was also remarked how 
the existence of a generic automorphism of $({\mathbb Q}, <)$ may be established using the same method (even though this had previously been shown to 
exist by a direct argument). The family $P$ consists of all pairs $(a, p)$ such that $a$ is in the `age' of the structure, meaning that it is 
isomorphic to a finite substructure, and $p$ is a partial map of $a$ of whichever kind is being considered. Here therefore $a$ is a lower semilattice.

If $f$ is an endomorphism of $T$, then a point $x$ of $T$ is said to {\em spiral} under $f$ if for some $n > m \ge 0$, $f^mx$ and $f^nx$ are 
comparable. The spiral is {\em positive} if $f^mx < f^nx$, {\em negative} if $f^mx > f^nx$, and {\em zero} if $f^mx = f^nx$. The {\em orbit} of $x$ is 
$\{y: (\exists m, n)f^mx = f^ny\}$, and the convex closure of an orbit is called an {\em orbital}. It is easy to see that the `parity' (positive, 
negative, or zero) of an orbital is independent of which of its elements is chosen. In the case of bijections (automorphisms or bimorphisms) the 
definitions can be slightly simplified (by taking $m = 0$); the version given also applies to endomorphisms. The notions also give rise to 
corresponding `partial' notions for members of $P$, such as `partial orbital'. 

\begin{lemma} \label{4.1} Any partial isomorphism (monomorphism, endomorphism) $p$ of a finite semilattice $a$ can be extended to a finite partial 
isomorphism (monomorphism, endomorphism respectively) $q$ of a finite semilattice $b \supseteq a$ such that all points of the domain of $q$ spiral.
\end{lemma}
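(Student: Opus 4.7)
The plan is to proceed by induction on the number of points of $\text{dom}(p)$ that fail to spiral, at each step producing a finite extension that forces at least one more point to spiral while adding only finitely many new vertices. Set $D := \text{dom}(p)$ and pick $x \in D$ that does not spiral under $p$. Since $D$ is finite, iterating $p$ from $x$ cannot remain in $D$ forever, for otherwise pigeonhole gives $p^i(x) = p^j(x)$ with $i < j$, which is a zero spiral contradicting the choice of $x$; hence there is a least $N \ge 1$ with $x_N := p^N(x) \in a \setminus D$, while $x_0 = x, x_1, \ldots, x_{N-1}$ all lie in $D$. The non-spiraling hypothesis forces the iterates $x_0, \ldots, x_N$ to be pairwise incomparable in the tree $a$.

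To force $x$ to spiral, I would adjoin a fresh vertex $z$ to $a$, place it so that $z$ is comparable with some $x_i$ (for instance strictly above $x$), and extend $p$ to $q$ by setting $q(x_N) = z$, then closing the domain under meets with values dictated by the homomorphism identity $q(u \wedge v) = q(u) \wedge q(v)$. The position of $z$ is constrained by meet preservation: for $y \in D$ lying below $x_N$ one has $y \wedge x_N = y$, so $p(y) = q(y \wedge x_N) = q(y) \wedge q(x_N) = p(y) \wedge z$, forcing $p(y) \le z$; this means $z$ must lie above each such $p(y)$, and the set of these images is a chain in $a$ because elements of $a$ below $x_N$ form a chain in the tree. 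I would then place $z$ on a fresh branch above the top $u$ of this image-chain, which yields easily computed meet formulas and makes the extension a valid semilattice map.

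The step I expect to be the main obstacle is the verification that whenever $x_N \wedge y \in D$ already, its existing $p$-image agrees with the forced value $z \wedge p(y)$. The non-spiraling hypothesis, the tree property, and $p$ being a homomorphism between finite subsemilattices together restrict these compatibility conditions to a tractable case analysis; residual conflicts are absorbed either by choosing $q(x_N)$ to route through an element of $D$ --- closing the orbit of $x$ into a cycle using the existing structure of $p$, since each $x_i$ for $i \ge 1$ has a preimage $x_{i-1} \in D$ --- or by inserting additional fresh branching points in $b$ to provide extra freedom. In the monomorphism and isomorphism variants the freshness of the adjoined vertices preserves injectivity automatically, and an injective meet-preserving map between finite subsemilattices is automatically an isomorphism onto its image, so all three versions of the lemma follow from essentially the same construction.

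Finally one checks that the newly added domain points --- namely $x_N$ and any meet-closures --- also spiral under $q$: the point $x_N$ iterates to $z$, which is comparable with $x_N$ by construction, and the meet-closures lie below $x_N$ and feed into the same cycle or fixed point. Since each round strictly decreases the count of non-spiraling points while introducing only finitely many new vertices and arrows, the procedure terminates at the required finite extension $(b, q)$ of the appropriate type.
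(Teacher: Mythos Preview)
There is a genuine gap. Your termination argument hinges on the claim that after one round the number of non-spiraling domain points strictly decreases, and you justify this by asserting that ``the point $x_N$ iterates to $z$, which is comparable with $x_N$ by construction.'' But this is false: you placed $z$ strictly above $x_0$, and since $x_0, \ldots, x_N$ are pairwise incomparable and the ambient structure is a tree (the set below any point is a chain), $z > x_0$ forces $z$ to be incomparable with each of $x_1, \ldots, x_N$. Thus $q^0(x_N) = x_N$ and $q^1(x_N) = z$ are incomparable while $q^2(x_N)$ is undefined, so $x_N$ does not spiral. The same reasoning shows that none of $x_1, \ldots, x_{N-1}$ acquire a spiral either, since $q^{N-i}(x_i) = z$ is incomparable with every earlier iterate. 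Net effect: you removed one non-spiraling point ($x_0$) and added one ($x_N$), so the count does not drop and the induction stalls.

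The paper's construction avoids this by closing the entire orbit into a cycle --- arranging that $q^n$ fix $x$ (a zero-parity spiral) rather than sending it strictly upward --- so that every $x_i$ spirals simultaneously; the induction is then on the number of non-spiraling \emph{orbits}, which genuinely decreases. Two further ingredients you omit are essential for this to go through: choosing $x$ \emph{minimal} among non-spiraling points, so that anything below $x$ in the domain already spirals with a period one can synchronize against, and explicitly carrying along the subtrees $a_i = \{y : p^i x \le y\}$ sitting above each $x_i$, which must be made mutually isomorphic (in the Aut case) or collapsed to chains (in the Bi/Epi case) before the orbit can be closed coherently. You mention cycle-closing only as a fallback for ``residual conflicts'', but it is in fact the mechanism that makes the argument work.
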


\begin{proof} Note that the situation here is different from the case for partial orders, see \cite{Kuske}, but the counter-example given there is 
definitely {\em not} semilinear, so there is no conflict. Note further that the lemma covers both cases, coloured and not, with the provisos on the 
maps mentioned above, though we mainly concentrate on the monochromatic case (positive type). It suffices to extend a given $(a, p)$ in which some 
point does not spiral, to $(b, q)$, in such a way that the number of orbits of points which do not spiral properly decreases, and then we may repeat 
until none remain. So suppose that $x \in a$ does not spiral under $p$, and let $x$ be chosen so that it is minimal with this property. This means 
that $\{x, px, p^2x, \ldots, p^{n-1}x\}$ is an antichain (pairwise incomparable) for some $n$ (which may be 1), such that $p^{n-1}x$ does not lie in 
dom $p$. Furthermore, by considering inverse images as necessary, we may also suppose that $x$ does not lie in range $p$.

We now observe that the subsets $a_i = \{y \in a: p^ix \le y\}$ are pairwise disjoint subtrees of $a$ (indeed subsemilattices). For if 
$y \in a_i \cap a_j$ where $i < j$, then as $a$ is a tree, $p^ix$ and $p^jx$ are comparable, and so $x$ spirals after all. We now need to extend $a$ 
and $p$. First consider the case of Aut. Here we can extend $a$ so that all $a_i$ are isomorphic by isomorphisms which respect levels, and agree with 
$p$ where defined, so by extending further and increasing $n$ if necessary, $p$ actually is an isomorphism from $a_i$ to $a_{i+1}$ for 
$0 \le i < n-1$ and $a_{n-1} \cap {\rm dom} \, p = \emptyset$. We can now extend $p$ to $q$, by requiring that $q^n$ fix all elements of $a_0$. This 
ensures that all elements of orbits contained in $\bigcup_{i < n}a_i$ spiral (with parity 0), so the number of orbits of points which do not spiral 
has decreased.

If there is no point of the domain of $p$ below any $p^ix$, then the sketch in the above paragraph suffices, as the map $q$ will be order-preserving. 
If however there is some point $y$ below some $p^ix$, then the argument may need modification. By choice of $x$, such points must spiral, and we take 
$y$ to be maximal such. Suppose that its spiral is zero or negative, and let $p^ly \le p^ky$ where $l > k \ge 0$ and $l-k$ is minimal. Extending $p$ 
if necessary, we may suppose that $k = 0$ and $y < x$ (this is the only reason why we have taken the spiral non-positive; if it is positive, then some 
image of $y$ will be taken below $x$ and above $y$ instead). By extending, we suppose that $n$ is a multiple of $l$, and now one can see that $q$ is 
order-preserving. The main case is that $p^{n-1}y < p^{n-1}x$ and $p^ny \le y$ (as $l$ divides $n$) $< x = p^nx$.

The above argument has to be modified for Bi and Epi. Once again we extend $p$ (one point at a time) and the subtrees $a_i$ for $0 \le i < n-1$, so 
that they are all contained in the domain of $p$, and by increasing $n$ if necessary, also assume that $a_{n-1}$ is linearly ordered and disjoint 
from the domain of $p$. (This is possible, and indeed we cannot `avoid' the possibility, since endomorphisms are allowed to take incomparable points 
to comparable ones.) We can now complete to spirals using a similar idea to that for isomorphisms; if there is no point below $p^nx$ in the domain of 
$p$, we just let the extension $q$ fix all points of $a_n$. If however there is such a maximal point $y$, by assumption it spirals with length $l - k$ 
as before, and we add new points in chains $a_m$ for $n \le m < n + l - k$ in bijective correspondence with those of $a_n$, and let $q$ act as an 
isomorphism from $a_m$ to $a_{m+1}$ for $n \le m < n + l - k - 1$, and from $a_{n + l - k - 1}$ to $a_n$.
\end{proof}

\begin{theorem} \label{4.2}
There are generic homomorphisms of each tree $T$ of the form $T_k^+$ and $T_k^-$ for $2 \le k \le \aleph_0$ for each of the three monoids Aut, Bi, and 
Epi, which are all distinct. 
\end{theorem}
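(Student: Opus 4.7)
The plan is to apply Theorem 2.3 of \cite{Lockett2} separately to the three families $P_{\mathrm{iso}}, P_{\mathrm{mono}}, P_{\mathrm{end}}$ of pairs $(a,p)$, where $a$ ranges over finite lower subsemilattices of $T$ (coloured in the $T_k^-$ case, as explained before Lemma \ref{4.1}) and $p$ is a partial isomorphism, monomorphism, or endomorphism of $a$ respectively. In each case I will exhibit a cofinal subfamily $P^*$ having the amalgamation property (and joint embedding property), which by Theorem 2.3 of \cite{Lockett2} yields a generic element of Aut, Bi, or Epi respectively. Distinctness will then follow from an inspection of what is forced on a generic of each kind.

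The first step uses Lemma \ref{4.1}: take $P^*$ to consist of those $(a,p) \in P$ in which \emph{every} point of $\mathrm{dom}\,p$ spirals under $p$. The lemma guarantees that $P^*$ is cofinal in the respective $P$. Joint embedding is immediate: given $(a_1,p_1),(a_2,p_2) \in P^*$ one places disjoint copies of $a_1,a_2$ as incomparable subtrees hanging from a new common lower bound (introducing a ramification point of the appropriate colour in the $T_k^-$ case and padding to order $k$), and takes the disjoint union of the two maps, which automatically still consists only of spiralling points.

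The main obstacle is amalgamation. Given $(a,p)$ and two extensions $(b_1,q_1),(b_2,q_2) \in P^*$ with $b_1 \cap b_2 = a$, I would first form the free amalgam $c_0$ of $b_1,b_2$ over $a$ as lower semilattices: any $x \in b_1 \setminus a$ and $y \in b_2 \setminus a$ have meet $x \wedge y = x \wedge_a y$ computed in $a$ (adding ramification points and colours as required). Then one tries to define $r = q_1 \cup q_2$. The obstruction is that $r$ need not be a map of the required kind on $c_0$, because taking meets may create new comparabilities between $q_1(x)$ and $q_2(y)$ that were not present between $x$ and $y$ (for the Aut case), or may force $r$ to be ill-defined on meets. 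The cure is to extend $c_0$ upwards by including the $r$-images of all the new meets that appear inside $c_0$, and to iterate: the spiral hypothesis on $q_1,q_2$ ensures that this closure terminates after finitely many steps, since each new point must lie on (or be freely added to) an already-closed spiral. A final invocation of Lemma \ref{4.1} restores the property that all points spiral, giving $(c,r) \in P^*$.

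For distinctness, note that by a standard Banach--Mazur style argument, any property enforceable by a dense set of conditions in $P$ holds on a comeagre set, hence is satisfied by the generic. The generic automorphism preserves $\parallel$ by definition (being an automorphism). The generic bimorphism, however, is \emph{not} an automorphism: for any incomparable pair $x \parallel y$ in $T$ and any $u < v$, the map $x \mapsto u,\ y \mapsto v$ lies in $P_{\mathrm{mono}}$, so the set of bimorphisms sending some incomparable pair to a comparable pair is dense open, hence comeagre. Similarly, the generic epimorphism is not a bimorphism: given any $x \ne y$ in $T$ with a common lower bound, the map sending both to a common image is in $P_{\mathrm{end}}$, so non-injectivity is generic. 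Thus the three conjugacy classes lie in distinct monoids in the chain $\mathrm{Aut} \subsetneq \mathrm{Bi} \subsetneq \mathrm{Epi}$, completing the proof.
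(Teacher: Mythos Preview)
Your overall plan---apply Theorem 2.3 of \cite{Lockett2} via a cofinal subfamily with amalgamation, and argue distinctness by density---matches the paper. But the cofinal class you choose is too large, and your amalgamation step fails on it.

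Concretely, ``every point of the domain spirals'' is not enough. In the Aut case, take $(a,p)\in P^*$ to be the chain $c_0<c_1<c_{10}<c_{11}$ with $p(c_0)=c_1$, $p(c_{10})=c_{11}$: two positive partial orbitals, all domain points spiral. Extension $(b_1,q_1)$ fills in $c_2,\ldots,c_9$ with $q_1(c_i)=c_{i+1}$ for $0\le i\le 10$, merging the two orbitals into one positive orbital. Extension $(b_2,q_2)$ instead inserts $e<d$ with $c_1<e<d<c_{10}$ and $q_2(d)=e$, a negative spiral. Both lie in your $P^*$. In any amalgam all these points are below $c_{11}$, hence linearly ordered (we are in a tree). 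If $c_j<d<c_{j+1}$ then order-preservation forces $c_{j+1}=r(c_j)<r(d)=e<r(c_{j+1})=c_{j+2}$, contradicting $e<d<c_{j+1}$. So no amalgam exists. Your ``close under images of new meets'' repair addresses a different (semilattice) obstruction and does nothing for this parity conflict; the spiral hypothesis gives termination, not consistency.

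The paper avoids exactly this by strengthening the cofinal class: in addition to ``all points spiral'', one requires that whenever $x<y$ lie in distinct partial orbitals of the \emph{same} parity, some $z$ with $x<z<y$ lies in an orbital of a \emph{different} parity. This blocks the merge-versus-insert conflict above (the two positive orbitals can never become adjacent), and the paper then checks amalgamation one added orbit at a time. You would need to add this alternating-parity condition to your $P^*$ and redo the amalgamation argument accordingly; cofinality still follows from Lemma \ref{4.1} together with an easy padding step that inserts the required separating orbitals. Your distinctness argument is fine as written.
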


\begin{proof} By \cite{Lockett2} Theorem 2.3 it suffices to show that $P$ has a cofinal subset $P_1$ with the amalgamation property. In each case we 
take this to comprise all those partial endomorphisms $p$ of finite subtrees of $T$ (in the case of $T_k^-$ which preserve `colours') all points of 
whose domains spiral, and such that whenever $x < y$ lie in distinct partial orbitals of $p$, and they have the same parity, there is some $z$ such 
that $x < z < y$ lying in an orbital of $p$ of different parity from $x$ and $y$.

The fact that $P_1$ is cofinal in $P$ follows mainly from Lemma \ref{4.1}---we may extend any given member of $P$ to some $p$ such that all members of 
its domain spiral. Then one extends further to ensure the given condition by adding in extra orbitals appropriately. If $x$ lies in a positive orbital 
for instance, where $x < y$ are given, the easiest way to ensure that $z$ as desired exists, and that all the conditions are compatible, is to let 
$x < z < y$, and let it have the same spiral length as $x$, but opposite parity.

Finally we have to check that $P_1$ has the amalgamation property. Since $P_1$ is cofinal in $P$, it suffices to show that if $(a, p)$ in $P_1$ has two 
extensions each obtained by adding just one orbit, $(b, q)$ and $(c, r)$, then these can be amalgamated in $P$. Let $q$ and $r$ be obtained by 
adjoining the orbits of $x$, $y$ respectively. If the added orbitals lie between different orbitals of $a$, then we can just take the union. If they 
lie between the same orbitals of $a$, then we have to decide the relationship between the members of the orbits of $x$ and $y$, but this can be done 
arbitrarily, for instance, putting the points of the orbit of $x$ below those of $y$ when they can be made comparable.

The point of requiring $(a, p)$ to lie in $P_1$ is that the extensions $(b, q)$ and $(c, r)$ cannot require the points of the new orbits to be related 
to points of $a$ in `different' ways, which would be the obstacle to amalgamation.   \end{proof}

The theorem is a little unsatisfactory in the sense that it does not describe in any at all explicit way what the generic maps of the various kinds 
look like. By contrast, in \cite{Lockett2} we outlined what they would be like in the case of automorphisms (though without formally proving 
existence). If the ramification order is infinite, then one can see that some branch (maximal chain) will be fixed (setwise); this is because no finite 
partial automorphism can prevent higher and higher points being fixed, so one can imagine starting from such a fixed branch, and `attaching' the 
generic behaviour to it, and the same applies to bimorphisms and epimorphisms. For a generic automorphism or epimorphism, the restriction to the fixed 
branch will itself be a generic automorphism of (that copy of) $\mathbb Q$, but this is not true for the bimorphism (since it will not be surjective). 
Off the fixed branch the behaviour will exhibit spirals of all possible finite lengths, and each of these will occur infinitely many times, but a 
completely explicit description seems too complicated to be of any value. If $k$ is finite, then there will definitely not be any (setwise) fixed 
branches in the generic maps, since a finite map suffices to exclude this possibility.

Similar results also apply for the reflexive relation $\le$, though with more complicated details.

\end{document}